\documentclass{amsart}
\usepackage{amssymb}
\usepackage[pdfencoding=auto]{hyperref}
\usepackage{mathtools}
\usepackage{comment}
\usepackage{enumitem}
\usepackage{graphicx}
\usepackage{ifthen}
\usepackage[svgnames]{xcolor}
\usepackage{scalerel}
\usepackage{makebox}
\usepackage{mathtools}

\usepackage[capitalise]{cleveref}


\setlist{}
\setenumerate{leftmargin=*,labelindent=0\parindent}
\setitemize{leftmargin=\parindent}

\usepackage{tikz}
\usepackage{tikz-cd}

\usetikzlibrary{decorations.markings}
\usetikzlibrary{decorations.pathmorphing}
\tikzset{
  module/.style={
    postaction={decorate},
    decoration={
      markings,
      mark=at position #1 with {\arrow{|}}}},
  module/.default=0.5,
  we/.style=
  { postaction={%
      decorate,
      decoration={
        markings,
        mark=at position #1 with {%
          \node[transform shape, yshift=.2em]{%
            \resizebox{0.5em}{!}{$\sim$}};}}}},
  we/.default=0.5,
  we'/.style=
  { postaction={%
      decorate,
      decoration={
        markings,
        mark=at position #1 with {%
          \node[transform shape, yshift=-.2em, rotate=180]{%
            \resizebox{0.5em}{!}{$\sim$}};}}}},
  we'/.default=0.5,
  iso/.style=
  { postaction={%
      decorate,
      decoration={
        markings,
        mark=at position #1 with {%
          \node[transform shape, yshift=.4em]{%
            \resizebox{0.5em}{!}{$\cong$}};}}}},
  iso/.default=0.5,
  iso'/.style=
  { postaction={
      decorate,
      decoration={
        markings,
        mark=at position #1 with {%
          \node[transform shape, yshift=-.4em, rotate=180]{%
            \resizebox{0.5em}{!}{$\cong$}};}}}},
  iso'/.default=0.5,
}

\tikzset{
  wearrow/.style={->, we},
  wedashed/.style={->, dashed, we},
  wedotted/.style={->, dotted, we},
  tfibarrow/.style={->>, we=0.45},
  tfibdotted/.style={->>,dotted, we=0.45},
  tfibdashed/.style={->>,dashed, we=0.45},
  tcofarrow/.style={>->, we},
  tcofdashed/.style={>->, dashed, we},
  uwearrow/.style={->, we'},
  uwedashed/.style={->, dashed, we'},
  uwedotted/.style={->, dotted, we'},
  utfibarrow/.style={->>, we'=0.45},
  utfibdotted/.style={->>,dotted, we'=0.45},
  utfibdashed/.style={->>,dashed, we'=0.45},
  utcofarrow/.style={>->, we'},
  isoarrow/.style={->, iso},
  isodashed/.style={->, dashed, iso},
  uisoarrow/.style={->, iso'},
  uisodashed/.style={->, dashed, iso'},
  isocell/.style={=>, iso},
  isocelldashed/.style={=>, dashed, iso},
  uisocell/.style={=>, iso'},
  uisocelldashed/.style={=>, dashed, iso'}
}

\newtheorem{thm}{Theorem}[section]
\newtheorem{lem}[thm]{Lemma}
\newtheorem{prop}[thm]{Proposition}

\newtheorem{cor}[thm]{Corollary}

\theoremstyle{definition}
\newtheorem{defn}[thm]{Definition}
\newtheorem{ex}[thm]{Example}

\crefname{lem}{Lemma}{Lemmas}
\crefname{thm}{Theorem}{Theorems}
\crefname{defn}{Definition}{Definitions}
\crefname{notn}{Notation}{Notations}
\crefname{const}{Construction}{Constructions}
\crefname{prop}{Proposition}{Propositions}
\crefname{rmk}{Remark}{Remarks}
\crefname{cor}{Corollary}{Corollaries}
\crefname{ex}{Example}{Examples}
\crefname{nex}{Non-Example}{Non-Examples}

 \makeatletter
 \let\c@equation\c@thm
 \makeatother
 \numberwithin{equation}{section}

\newcommand{\op}{\textup{op}}
\newcommand{\id}{\textup{id}}
\newcommand{\ob}{\textup{ob}}

\makeatletter
\def\@mathlower#1#2#3{\setbox0=\hbox{$\m@th#2#3$}\lower#1\ht0\box0}
\def\mathlower#1#2{\mathpalette{\@mathlower{#1}}{#2}}
\makeatother

\newcommand{\slicel}[2]{\vphantom{#2}^{{#1}/}\mkern-2mu{#2}}


\newcommand{\DDelta}{\mathbf{\Delta}}

\newcommand{\cat}[1]{\textup{\textsf{#1}}}

\newcommand{\Cat}{\cat{Cat}}
\newcommand{\sSet}{\cat{sSet}}
\newcommand{\sCat}{\cat{sCat}}

\DeclareSymbolFont{bbold}{U}{bbold}{m}{n}
\DeclareSymbolFontAlphabet{\mathbbb}{bbold}
\newcommand{\catone}{\ensuremath{\mathbbb{1}}}
\newcommand{\cattwo}{\ensuremath{\mathbbb{2}}}


\newcommand{\cA}{\mathcal{A}}
\newcommand{\cB}{\mathcal{B}}
\newcommand{\cC}{\mathcal{C}}
\newcommand{\cD}{\mathcal{D}}

\newcommand{\cU}{\mathcal{U}}
\newcommand{\cV}{\mathcal{V}}
\newcommand{\cW}{\mathcal{W}}
\newcommand{\cY}{\mathcal{Y}}

\newcommand{\cstar}{\operatorname{disc}}
\newcommand{\hcnerve}{\mathfrak{N}}

\newcommand{\aamalg}[1]{\underset{#1}{{\amalg}}}

\begin{document}

\title{Pushouts of Dwyer maps are \texorpdfstring{$(\infty,1)$}{(infinity,1)}-categorical}

\author{Philip Hackney}
\address{Department of Mathematics,
University of Louisiana at Lafayette, LA, USA
}
\email{philip@phck.net} 

\author{Viktoriya Ozornova}
\address{Max Planck Institute for Mathematics, Bonn, Germany}
\email{viktoriya.ozornova@mpim-bonn.mpg.de}

\author{Emily Riehl}
\address{Department of Mathematics\\Johns Hopkins University \\ 
Baltimore, MD 
 \\ USA}
\email{eriehl@jhu.edu}

\author{Martina Rovelli}
\address{Department of Mathematics and Statistics, 
University of Massachusetts at 
Amherst,
MA, 
USA
}
\email{rovelli@math.umass.edu}

\date{\today}

\begin{abstract}
The inclusion of 1-categories into $(\infty,1)$-categories fails to preserve colimits in general, and pushouts in particular. In this note, we observe that if one functor in a span of categories belongs to a certain previously-identified class of functors, then the 1-categorical pushout is preserved under this inclusion.
Dwyer maps, a kind of neighborhood deformation retract of categories, were used by Thomason in the construction of his model structure on 1-categories.
Thomason previously observed that the nerves of such pushouts have the correct weak homotopy type.
We refine this result and show that the weak homotopical equivalence is a weak categorical equivalence. We also identify a more general class of functors along which 1-categorical pushouts are $(\infty,1)$-categorical.
\end{abstract}

\thanks{This material is based upon work supported by the National Science Foundation under Grant No.\ DMS-1440140, which began while the authors were in residence at the Mathematical Sciences Research Institute in Berkeley, California, during the Spring 2020 semester. The authors learned a lot from fruitful discussions with the other members of the MSRI-based working group on $(\infty,2)$-categories. This work was supported by a grant from the Simons Foundation (\#850849, PH). The second author thankfully acknowledges the financial support by the DFG grant OZ 91/2-1 with the project nr.~442418934. The third author is also grateful for support from the NSF via DMS-1652600 and DMS-2204304, from the ARO under MURI Grant W911NF-20-1-0082, and by the Johns Hopkins President's Frontier Award program. The fourth author is deeply appreciative of the Mathematical Sciences Institute at the Australian National University for their support during the pandemic year and is grateful for support from the NSF via DMS-2203915. The authors also wish to thank the anonymous referee for \cite{HORR} who pointed out that their originally-claimed proof of this result, which originally appeared there, sufficed only to cover the case used there. The anonymous referee for this paper suggested a strategy to greatly simplify our original proof of the general result, which allowed us to also prove more general theorems.}

\maketitle

\setcounter{tocdepth}{1}
\tableofcontents

\section{Introduction}

Classical 1-categories define an important special case of $(\infty,1)$-categories. The fact that $(\infty,1)$-category theory restricts to ordinary 1-category can be understood, in part, by the observation that the inclusion of 1-categories into $(\infty,1)$-categories is full as an inclusion of $(\infty,2)$-categories. This full inclusion is reflective---with the left adjoint given by the functor that sends an $(\infty,1)$-category to its quotient ``homotopy category''---but not coreflective and as a consequence colimits of ordinary 1-categories need not be preserved by the passage to $(\infty,1)$-categories. Indeed there are known examples of colimits of 1-categories that generate non-trivial higher-dimensional structure when the colimit is formed in the category of $(\infty,1)$-categories. 

For example, consider the span of posets:
\begin{equation}\label{eq:poset-non-example}
\begin{tikzcd}[sep=tiny]
 & & [-6pt] \bullet \arrow[dd] \arrow[dl] \arrow[dr] \\ [+5pt] & \bullet \arrow[dr, shorten >= -.25em, shorten <= -.45em] &  & [-6pt] \bullet \arrow[dl, shorten >= -.25em, shorten <= -.45em] \\ [-3pt] & & \bullet \\ \bullet \arrow[uur] \arrow[urr] \arrow[rr] & & \bullet \arrow[u, shorten >= -.25em, shorten <= -.25em] & & \bullet \arrow[ull] \arrow[uul] \arrow[ll]
\end{tikzcd} \hookleftarrow
\begin{tikzcd}[sep=tiny]
 & & [-7pt] \bullet  \arrow[dl] \arrow[dr] \\ [+5pt] & \bullet &  & [-7pt] \bullet  \\ [-3pt] & & \makebox*{$\bullet$}{$~$} \\ \bullet \arrow[uur] \arrow[rr] & & \bullet  & & \bullet  \arrow[uul] \arrow[ll]
\end{tikzcd} \to \quad \bullet
\end{equation}
The pushout in 1-categories is the arrow category $\bullet \to \bullet$, while the pushout in $(\infty,1)$-categories defines an $(\infty,1)$-category which has the homotopy type of the 2-sphere.

As a second example, let $M$ be the monoid with five elements $e, x_{11}, x_{12}, x_{21}, x_{22}$ and multiplication rule given by $x_{ij}x_{k\ell}=x_{i\ell}$.
Inverting all elements of $M$ yields the trivial group.
That is, if one considers $M$ as a 1-category with a single object, then the pushout of the span $M \leftarrow \amalg_{M} \cattwo \rightarrow \amalg_{M} \mathbb{I}$ (where $\cattwo$ is the free-living arrow and $\mathbb{I}$ is the free-living isomorphism) in categories is the terminal category $\catone$. 
On the other hand, the pushout of this span in $(\infty,1)$-categories is the $\infty$-groupoid $S^2$ as follows from  \cite[Lemma]{FiedorowiczCounterexample}. 
The results of \cite{McDuffMonoids} imply that this example is generalizable to a vast class of monoids.

More generally, the Gabriel--Zisman category of fractions $\cC[\cW^{-1}]$ is formed by freely inverting the morphisms in a class of arrows $\cW$ in a 1-category $\cC$. 
This can also be constructed as a pushout of 1-categories of the span
\begin{equation*}
\label{spanLoc}
 \begin{tikzcd} \cC & \aamalg{w \in \cW} \cattwo \arrow[l] \arrow[r, hook] & \aamalg{w \in \cW} \mathbb{I}
\end{tikzcd}   
\end{equation*}
where each arrow in $\cW$ is replaced by a free-living isomorphism. By contrast, the $(\infty,1)$-category defined by this pushout is modelled by the Dwyer--Kan simplicial localization, which has non-trivial higher dimensional structure in many instances \cite{DwyerKan:SLC}, \cite[Lemma 18]{Stevenson:CMSSL}, \cite[p.~168]{JoyalVolumeII}. Indeed, all $(\infty,1)$-categories arise in this way \cite{BarwickKan:RCAMHTHT}.

As the examples above show, pushouts of 1-categories in particular are problematic. Our aim in this paper is to prove that a certain class of pushout diagrams of 1-categories are guaranteed to be $(\infty,1)$-categorical. The requirement is that one of the two maps in the span that generates the pushout belong to a class of functors between 1-categories first considered by Thomason under the name ``Dwyer maps''  \cite[Definition 4.1]{ThomasonModelCat} that feature in a central way in the construction of the Thomason model structure on categories.

\begin{defn}[Thomason]\label{defn:Dwyer-map}
A full sub-$1$-category inclusion $I \colon \cA \hookrightarrow \cB$ is \textbf{Dwyer map}
if the following conditions hold.
\begin{enumerate}[label=(\roman*)]
\item The category $\cA$ is a \emph{sieve} in $\cB$, meaning there is a necessarily unique functor $\chi \colon \cB \to \cattwo$ with $\chi^{-1}(0) = \cA$. We write $\cV:=\chi^{-1}(1)$ for the complementary \emph{cosieve} of $\cA$ in $\cB$. 
\item The inclusion $I \colon \cA \hookrightarrow\cW$ into the \emph{minimal cosieve}\footnote{Explicitly $\cW$ is the full subcategory of $\cB$ containing every object that arises as the codomain of an arrow with domain in $\cA$.} $\cW \subset \cB$ containing $\cA$ admits a right adjoint left inverse $R\colon \cW\to \cA$, a right adjoint for which the unit is an identity.
\end{enumerate}
\end{defn}

Schwede describes Dwyer maps as ``categorical analogs of the inclusion of a neighborhood deformation retract'' \cite{SchwedeOrbispaces}. In fact, many examples of Dwyer maps are more like deformation retracts, in that the cosieve $\cW$ generated by $\cA$ is the full codomain category $\cB$.

\begin{ex}\label{DwyerExamples}$\quad$
\begin{enumerate}[label=(\roman*), ref=\roman*]
\item \label{basicDwyer}
The vertex inclusion $0 \colon \catone \to \cattwo$ is a Dwyer map, with $! \colon \cattwo \to \catone$ the right adjoint left inverse.
The other vertex inclusion $1 \colon \catone \to \cattwo$ is not a Dwyer map.
\item \label{ex:new-terminal-Dwyer}  Generalizing the previous example, if $\cA$ is a category with a terminal object and  $\cA^{\triangleright}$ is the category which formally adds a new terminal object, then the inclusion $\cA \hookrightarrow \cA^{\triangleright}$ is a  Dwyer map.\footnote{If $\cA$ does not have a terminal object, then $\cA \to \cA^{\triangleright}$ need not be a Dwyer map. Indeed, if $\cA=\catone\amalg\catone$, the only cosieve containing $\cA$ is $\cA^\triangleright$ itself, and there cannot be a right adjoint $\cA^\triangleright \to \cA$ as $\cA$ does not have a terminal object. But see \cref{discretely flat examples}\eqref{df example cospan shape}, which explains that this example is \emph{discretely flat}.} 
\end{enumerate}
\end{ex}

Thomason observed that Dwyer maps are stable under pushouts, as we now recall:

\begin{lem}[{\cite[Proposition 4.3]{ThomasonModelCat}}]
\label{pushoutDwyer}
Any pushout of a Dwyer map $I$ defines a Dwyer map $J$:
\[
\begin{tikzcd}
  \cA \arrow[d, "I"', hook]\arrow[r, "F"]\arrow[dr, phantom, "\ulcorner" very near end] &\cC \ar[d, "J", hook]\\
  \cB \arrow[r, "G" swap] &\cD.
\end{tikzcd}
\]
\end{lem}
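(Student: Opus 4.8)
The plan is to produce an explicit model for the pushout $\cD\cong\cC\amalg_\cA\cB$ that exploits the sieve structure and then to read off both clauses of \cref{defn:Dwyer-map} directly from it. Write $\chi\colon\cB\to\cattwo$ for the classifying functor of the sieve $\cA$, $\cV=\chi^{-1}(1)$ for the complementary cosieve, $\cW\subset\cB$ for the minimal cosieve containing $\cA$, $R\colon\cW\to\cA$ for the right adjoint left inverse, and $\epsilon\colon IR\Rightarrow\id_\cW$ for the counit of $I\dashv R$. Since the unit is the identity, the triangle identities amount exactly to the conditions $\epsilon_a=\id_a$ for $a\in\cA$ and $R(\epsilon_w)=\id_{Rw}$ for all $w$; moreover every morphism $h\colon a\to w$ with $a\in\cA$ factors uniquely as $h=\epsilon_w\circ Rh$. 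Conversely, a natural $\epsilon$ satisfying these two conditions reconstructs the adjunction, and it is this reformulation that I will use to produce the adjunction for $J$.

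The central construction is a \emph{collage} category $\cD'$ with object set $\ob\cC\sqcup\ob\cV$ and hom-sets $\cD'(c,c')=\cC(c,c')$, $\cD'(v,v')=\cB(v,v')$, $\cD'(v,c)=\varnothing$, and
\[
\cD'(c,v)=\int^{a\in\cA}\cC(c,Fa)\times\cB(a,v),
\]
with composition inherited from $\cC$ and $\cB$ in the evident way. First I would check that $\cD'$ is a well-defined category and that it enjoys the universal property of the pushout: a functor out of $\cD'$ is precisely a pair of functors out of $\cC$ and $\cB$ agreeing on $\cA$, the well-definedness on the coend using the compatibility $PF=QI$. This identifies $\cD'\cong\cD$ and at the same time shows $J\colon\cC\to\cD$ to be injective on objects, full, and faithful. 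The emptiness $\cD'(v,c)=\varnothing$ is forced by the sieve hypothesis (no arrows run from $\cV$ into $\cA$ in $\cB$), and this is exactly what prevents alternating zigzag composites and keeps the pushout equal to the naive collage.

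Clause (i) for $J$ is then immediate: the functor $\psi\colon\cD\to\cattwo$ induced by the constant functor at $0$ on $\cC$ and by $\chi$ on $\cB$ (which agree on $\cA$) satisfies $\psi^{-1}(0)=\cC$, exhibiting $\cC$ as a sieve in $\cD$ with complementary cosieve the image of $\cV$. For clause (ii), the collage description shows that the minimal cosieve $\cW_\cD$ of $\cC$ in $\cD$ is the full subcategory on $\ob\cC\sqcup\ob(\cW\cap\cV)$, which is canonically $\cC\amalg_\cA\cW$. I would then define $R_\cD\colon\cW_\cD\to\cC$ by the universal property from $\id_\cC$ and $FR\colon\cW\to\cC$ (these agree on $\cA$ since $RI=\id_\cA$), so that $R_\cD$ restricts to the identity on $\cC$, giving the left-inverse property, and sends a cosieve object $w$ to $FRw$. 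The candidate counit $\epsilon^\cD\colon JR_\cD\Rightarrow\id_{\cW_\cD}$ is the identity on $\cC$ and is $G(\epsilon_w)\colon FRw\to w$, represented by the class $(\id_{FRw},\epsilon_w)$, on each cosieve object $w$; one checks $R_\cD(\epsilon^\cD_w)=F(R\epsilon_w)=\id$, so by the reformulation above it remains only to verify naturality of $\epsilon^\cD$.

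The naturality check is the heart of the argument and the step I expect to be the main obstacle. For morphisms within $\cC$ it is trivial, and for morphisms within the cosieve it follows by applying $G$ to the naturality squares of $\epsilon$ in $\cW$. The essential case is a morphism $c\to w$ represented by a class $(g,h)$ with $g\colon c\to Fa$ and $h\colon a\to w$; there naturality reduces to the identity $(g,h)\approx(FRh\circ g,\epsilon_w)$ in the coend, which is precisely the defining coend relation applied to $k=Rh\colon a\to Rw$ together with the coreflection factorization $h=\epsilon_w\circ Rh$. Thus the whole proof turns on two things: correctly establishing the collage presentation via the universal property (the delicate point, where the sieve hypothesis rules out phantom morphisms), and this single coend identity (where the coreflection is used), which together make $J$ a full sieve inclusion admitting $R_\cD$ as a right adjoint left inverse with identity unit.
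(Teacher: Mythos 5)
Your proof is correct, but it takes a genuinely different route from the paper. The paper (following Thomason, with the explicit pushout description deferred to \cref{prop:PushoutDwyerHom} and \cite{BMOOPY}) transports the entire Dwyer structure through universal properties alone: the sieve functor on $\cD$ is induced from $\chi$ and the constant functor at $0$; the cosieve $\cY \cong \cC \aamalg{\cA} \cW$ arises by pasting pushout squares vertically; the retraction $S$ is induced from $\id_\cC$ and $FR$ exactly as your $R_\cD$; and---the step where the approaches diverge most---the counit $\nu \colon JS \Rightarrow \id_\cY$ is obtained as the functor $\cY \to \cY^{\cattwo}$ induced by $\Delta \circ J$ and $G^{\cattwo} \circ \varepsilon$, which agree on $\cA$. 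Encoding the natural transformation as a functor into the cotensor $\cY^{\cattwo}$ makes the naturality you identify as ``the heart of the argument'' automatic, so the paper never touches the coend relation. You instead construct the pushout explicitly as a collage with $\cD'(c,v) = \int^{a} \cC(c,Fa) \times \cB(a,v)$, verify the universal property, and check naturality of $\epsilon^\cD$ by hand via the identity $(g,h) \sim (FRh \circ g, \epsilon_w)$; your reformulation of a right adjoint left inverse (identity unit plus $R\varepsilon = \id$, equivalently unique factorization through the counit) is valid, and your naturality computation is exactly right. What your route buys is that the hom-set formula of \cref{prop:PushoutDwyerHom} falls out as a byproduct (your coend collapses to $\cC(c, FRw)$ for $w \in \cU$ via the factorization $h = \epsilon_w \circ Rh$, and is empty for $v \notin \cW$), and fullness of $J$ plus the identification of the minimal cosieve become transparent rather than needing separate citation; what the paper's route buys is brevity and immunity to coend bookkeeping. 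The only places where your proposal is a sketch rather than a proof are the ones you flag yourself: associativity and functoriality checks for the collage and for the induced functor in the universal-property verification, all of which are routine.
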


Note for example, that \cref{pushoutDwyer} explains the Dwyer map of \cref{DwyerExamples}\eqref{ex:new-terminal-Dwyer}: if $\cA$ has a terminal object $t$, then the pushout
\[
\begin{tikzcd} \arrow[dr, phantom, "\ulcorner" very near end] \catone \arrow[d, hook, "0"'] \arrow[r, "t"] & \cA \arrow[d, hook] \\ \cattwo \arrow[r] & \cA^\triangleright
\end{tikzcd}
\]
defines the category $\cA^\triangleright$.

Our aim is to show that pushouts of categories involving at least one Dwyer map can also be regarded as pushouts of $(\infty,1)$-categories in the sense made precise by considering the nerve embedding from categories into quasi-categories:

\begin{thm} 
\label{DwyerPushout}
Let 
\[
\begin{tikzcd}
  \cA \arrow[d, "I"', hook]\arrow[r, "F"]\arrow[dr, phantom, "\ulcorner" very near end] &\cC \ar[d, "J", hook]\\
  \cB \arrow[r, "G" swap] &\cD
\end{tikzcd}
\]
be a pushout of categories, and assume $I$ to be a Dwyer map. Then the induced map of simplicial sets \[N\cB\aamalg{N\cA} N\cC \to N\cD\] is a weak categorical equivalence. 
 \end{thm}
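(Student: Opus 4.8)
The plan is to prove something slightly stronger than a weak categorical equivalence, namely that the comparison map $\phi\colon N\cB\aamalg{N\cA}N\cC\to N\cD$ is \emph{inner anodyne}, and hence a trivial cofibration---in particular a weak categorical equivalence---in the Joyal model structure. Two preliminary observations frame the argument. First, since $I$ is a full subcategory inclusion, $NI$ is a monomorphism, and a short check using the sieve structure shows that $\phi$ itself is a monomorphism: a simplex of $N\cD$ lying entirely in the sieve $\cC$ comes uniquely from $N\cC$, a simplex landing anywhere in the complementary cosieve comes uniquely from $N\cB$, and the overlap consists exactly of the $N\cA$-simplices that are already identified in the pushout. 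Second, by \cref{pushoutDwyer} the map $J\colon\cC\hookrightarrow\cD$ is again a Dwyer map, so $\cD$ carries a classifying functor $\chi_\cD\colon\cD\to\cattwo$ with $\chi_\cD^{-1}(0)=\cC$ and cosieve $\cV_\cD=\chi_\cD^{-1}(1)$, canonically identified on objects with $\cV=\cB\setminus\cA$.

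The structural heart of the argument is to understand the ``crossing'' simplices of $N\cD$ that are not in the image of $\phi$. Because $\cC$ is a sieve, every simplex $\sigma\colon[n]\to\cD$ has $\chi_\cD\sigma$ monotone, so $\sigma$ visits $\cC$ and then $\cV_\cD$, never returning; and $\sigma$ fails to lie in the image of $\phi$ precisely when it genuinely crosses while its $\cC$-part is not contained in $\cA$. The key point is that the crossing is controlled by the Dwyer adjunction. A crossing can only land in the minimal cosieve $\cW$, since an object of $\cV\setminus\cW$ receives no morphism from $\cA$ and hence none from $\cC$ in $\cD$; and for each $v$ in $\cW$ the counit of the adjunction $I\dashv R$ supplies a universal arrow $\epsilon_v\colon Rv\to v$ with $Rv\in\cA$. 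As the unit is an identity, every arrow $a\to v$ from $\cA$ factors uniquely through $\epsilon_v$, and after pushout this shows that every crossing morphism $c\to v$ in $\cD$ factors uniquely as $c\to F(Rv)\xrightarrow{\bar\epsilon_v} v$ with the first leg in $\cC$. I would record this canonical factorization as the central lemma: it is exactly the extra rigidity, beyond the homotopy-type information used by Thomason, that the right adjoint provides.

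With this in hand, the plan is to exhibit $N\cD$ as the union of a filtration $N\cB\aamalg{N\cA}N\cC=X_0\subseteq X_1\subseteq\cdots$ obtained by attaching the nondegenerate crossing simplices in order of the length of their $\cC$-part, and to show that each stage $X_m\hookrightarrow X_{m+1}$ is a pushout of a coproduct of inner horn inclusions $\Lambda^k_j\hookrightarrow\Delta^k$. The canonical factorization through $F(Rv)$ is what identifies the relevant horn: inserting the object $F(Rv)$ just before the first $\cV_\cD$-vertex of a crossing simplex produces a simplex all of whose faces but one already lie in the previous stage, the single missing face being an inner one, as it drops the last $\cC$-vertex, which is interior because the simplex both begins in $\cC$ and ends in $\cV_\cD$. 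I expect the main obstacle to be precisely the bookkeeping of this filtration: choosing the ordering so that the attaching maps are well defined on the nose, checking that degenerate simplices and the coend identifications among crossing morphisms do not spoil the inner-horn description, and verifying that no outer horns---such as the bare edge $c\to v$, which must be attached together with its witnessing triangle rather than on its own---sneak in. This last point is where the argument genuinely uses the Joyal structure: the counit only furnishes a directed $\Delta^1$-worth of data, which suffices for a weak homotopy equivalence but not for a weak categorical equivalence, so the coreflection must be leveraged to produce honest inner fillers rather than merely a deformation retraction.
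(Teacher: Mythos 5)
Your opening move fails, and with it the whole strategy: you propose to show that $\phi\colon N\cB\aamalg{N\cA}N\cC\to N\cD$ is inner anodyne, beginning with the claim that ``a short check using the sieve structure shows that $\phi$ itself is a monomorphism.'' But the theorem places no injectivity or faithfulness hypotheses on $F$, and without them $\phi$ need not be a monomorphism, hence cannot be inner anodyne (inner anodyne maps are monomorphisms). Concretely, take the Dwyer map $I\colon \cattwo \hookrightarrow \cattwo^{\triangleright} \cong \catthree$ of \cref{DwyerExamples}\eqref{ex:new-terminal-Dwyer} and $F={!}\colon \cattwo \to \catone$. Using \cref{prop:PushoutDwyerHom} one computes $\cD\cong\cattwo$, while $N\cB\aamalg{N\cA}N\cC \cong \Delta^2/\Delta^{\{0,1\}}$; the images of the edges $\{0,2\}$ and $\{1,2\}$ remain distinct in this quotient (neither lies in the collapsed subcomplex) but both map to the unique nondegenerate edge of $N\cD\cong\Delta^1$. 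The flaw in your ``short check'' is the uniqueness claim for crossing simplices: the pushout of simplicial sets only glues simplices lying wholly in $N\cA$, whereas $N\cD$ also identifies simplices of $N\cB$ whose $\cA$-parts are merely identified by $F$. Tellingly, the paper states the inner anodyne conclusion only as \cref{AnodyneDwyer}, under the extra hypothesis that $F$ is faithful and injective on objects---and even there it is \emph{deduced} from the weak categorical equivalence via Stevenson's criterion (trivial cofibration, bijective on $0$-simplices, quasi-category codomain), not proved by an explicit filtration. The acknowledgments record that an earlier directly-combinatorial argument of essentially your shape was found by a referee to cover only that restricted case, so the horn bookkeeping you flag as ``the main obstacle'' (fillers versus horn sources when the last $\cC$-vertex is already $F(Rv)$ with counit crossing edge, degeneracies created by insertion, identifications among candidate horns when $F$ is not injective) is exactly where this approach has historically broken down.

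Your structural observations are sound as far as they go---the canonical factorization of every crossing arrow $c\to u$ through $\nu_u\colon FR u \to u$ is precisely \cref{prop:PushoutDwyerHom}, and crossings do land only in the cosieve generated by $\cC$---but they support at best the restricted \cref{AnodyneDwyer}, not \cref{DwyerPushout}. The paper's actual proof sidesteps the simplicial-set combinatorics entirely: it embeds $\Cat$ in $\sCat$ via $\cstar$, which preserves pushouts on the nose (\cref{ConstColims}), shows that $\cstar(I)$ is flat in the Bergner model structure (\cref{prop we preservation}), concludes that the strict pushout is a homotopy pushout (\cref{simp cat theorem}), and transports the statement across the Quillen equivalence $\hcnerve$ using $N\cong\hcnerve\cstar$. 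That route is what handles the general case, in which the comparison map is a weak categorical equivalence but, as the example above shows, not even a cofibration. To salvage your argument you would have to either add the hypotheses of \cref{AnodyneDwyer}, or replace the inner-anodyne target with a homotopy-pushout argument that tolerates non-monomorphisms.
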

 
By a weak categorical equivalence, we mean a weak equivalence in Joyal's model structure for quasi-categories \cite[\S 1]{JoyalTierney:QCSS}. \cref{DwyerPushout} is a refinement of a similar result of Thomason \cite[Proposition 4.3]{ThomasonModelCat} which proves that the same map is a weak homotopy equivalence.

While \cref{DwyerPushout} is the natural generalization of Thomason's result,  we prove it by considering instead the embedding of 1-categories as discrete simplicially enriched categories, using Bergner's model of $(\infty,1)$-categories. This tactic was suggested by an anonymous referee; for our original argument using the quasi-category model see \cite{arXiv-v2}. We show in \cref{prop we preservation} that a Dwyer map, when considered as a map between discrete simplicial categories, satisfies a certain ``flatness'' property with respect to the Bergner model structure. Since this discrete embedding of 1-categories into simplicial categories preserves pushouts, unlike the nerve embedding of 1-categories into quasi-categories, it is straightforward to prove that: 

\begin{thm}\label{model indep main theorem}
The inclusion $\Cat_1 \hookrightarrow \Cat_{(\infty,1)}$ of the $(\infty,1)$-category of $1$-categories into the $(\infty,1)$-category of $(\infty,1)$-categories preserves (homotopy) pushouts along Dwyer maps.
\end{thm}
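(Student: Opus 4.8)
The plan is to argue model-categorically, presenting $\Cat_{(\infty,1)}$ by the Bergner model structure on $\sCat$ and $\Cat_1$ by the canonical (folk) model structure on $\Cat$ (weak equivalences the equivalences of categories, cofibrations the functors injective on objects), and to present the inclusion by the functor $\disc\colon \Cat\to\sCat$ that regards a $1$-category as a simplicially enriched category with constant hom-objects. First I would record two structural features of $\disc$. On the one hand, $\disc$ is change of base along the constant-diagram functor $\Set\to\sSet$; this functor is strong monoidal and is itself a left adjoint (its right adjoint is $\lim_{\DDelta^{\op}}$), so the induced functor on enriched categories is a left adjoint, with right adjoint the corresponding change of base along $\lim_{\DDelta^{\op}}$. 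In particular $\disc$ preserves strict pushouts, carrying the $1$-categorical pushout $\cD$ to the strict pushout $\disc\cB \aamalg{\disc\cA}\disc\cC$ in $\sCat$. On the other hand, $\disc$ sends equivalences of categories to Dwyer--Kan equivalences, hence is homotopical and descends to the standard inclusion $\Cat_1\hookrightarrow\Cat_{(\infty,1)}$ of $\infty$-categories.

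Next I would pin down the homotopy pushout on the source side. In the folk model structure every object is cofibrant, so the structure is left proper, and the Dwyer map $I\colon\cA\hookrightarrow\cB$, being a full—in particular injective-on-objects—inclusion, is a cofibration. Consequently the strict pushout $\cD$ already computes the homotopy pushout of $\cB\leftarrow\cA\to\cC$ in $\Cat_1$.

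The conclusion then chains three identifications. Applying $\disc$ to the homotopy pushout $\cD$ yields, by pushout-preservation, the strict pushout $\disc\cB\aamalg{\disc\cA}\disc\cC$; and by \cref{prop we preservation} the map $\disc I$ is flat for the Bergner model structure, so this strict pushout is in fact a homotopy pushout in $\Cat_{(\infty,1)}$. Since the source square represents the pushout in $\Cat_1$ and its image under $\disc$ is a homotopy pushout square, the induced $\infty$-functor $\disc$ carries the pushout cocone to a pushout cocone, which is exactly the claimed preservation.

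The main obstacle is concentrated in \cref{prop we preservation}, where the Dwyer hypothesis does its real work and which I here take as given. What makes it unavoidable is that $\disc I$ is in general \emph{not} a cofibration in the Bergner model structure—indeed $\disc$ does not preserve cofibrancy, since relations among morphisms of $\cB$ become strict identities in the constant hom-object $\disc\cB$ rather than being resolved up to homotopy—so on the target side there is no formal counterpart of the ``cofibration plus left properness'' shortcut that trivializes the source side. Flatness is precisely the substitute that forces the relevant strict pushout to be homotopical, which is why the discrete model, rather than the nerve into quasi-categories, makes the argument go through.
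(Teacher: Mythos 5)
Your proposal is correct and follows essentially the same route as the paper: model the inclusion by $\cstar\colon\Cat\to\sCat$, note that on the source side a Dwyer map is a cofibration in the left proper canonical model structure so the strict pushout is already a homotopy pushout, and on the target side combine strict-pushout preservation by $\cstar$ (the paper's \cref{ConstColims}, which you justify instead via the strong monoidal left adjoint $\Set\to\sSet$) with flatness from \cref{prop we preservation} plus left properness of the Bergner model structure (the content of \cref{simp cat theorem}, citable to \cite[B.12]{HillHopkinsRavenel:ONEKIO} or \cite[\S 1.5]{BataninBerger:HTAPM}) to conclude the image square is a homotopy pushout. Your closing remark correctly identifies why flatness is the essential substitute for cofibrancy, since $\cstar$ does not send cofibrations of $\Cat$ to Bergner cofibrations.
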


When then deduce \cref{DwyerPushout} as a corollary of this result.

Though the two previous theorems refer to Dwyer maps, they also hold for the pseudo-Dwyer maps introduced by Cisinski \cite{CisinskiDwyer}, which are retracts of Dwyer maps.
In fact, we prove both \cref{DwyerPushout} and \cref{model indep main theorem} for more general classes of functors introduced in \cref{discretely flat} that include the Dwyer maps. The key property of a Dwyer map (or pseudo-Dwyer map) is that it is ``discretely flat'' as well as a faithful inclusion. For a functor to be \textbf{discretely flat} means that pushouts along it, considered as a functor of discrete simplicial categories, preserve Dwyer-Kan equivalences of simplicial categories.

In a companion paper, we give an application of \cref{DwyerPushout} to the theory of $(\infty,2)$-categories. There we prove:

\begin{thm}[{\cite[4.4.2]{HORR}}] The space of composites of any pasting diagram in any $(\infty,2)$-category is contractible.
\end{thm}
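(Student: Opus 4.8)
The plan is to realize the space of composites as the homotopy fiber of a restriction map between derived mapping spaces of $(\infty,2)$-categories, and to show that this restriction is a trivial fibration by exhibiting the pasting shape as an iterated strict colimit of its atomic cells that \cref{DwyerPushout} certifies to be a homotopy colimit. First I would fix a model for $(\infty,2)$-categories (for instance $\TTheta_2$-spaces, or categories enriched in a model of $(\infty,1)$-categories) in which each pasting diagram $\rho$ in an $(\infty,2)$-category $\cC$ is classified by a map out of a gaunt $2$-category $\shape(\rho) \in \Gaunt$ built from the pasting scheme. Inside $\shape(\rho)$ sits the sub-shape $\mathrm{sp}(\rho)$ spanned by the generating cells of the diagram, before any composites are imposed, and I would take as the definition of the space of composites the homotopy fiber over $\rho$ of the restriction map
\[
\mathrm{Map}(\shape(\rho), \cC) \longrightarrow \mathrm{Map}(\mathrm{sp}(\rho), \cC).
\]
Contractibility of this fiber for every $\cC$ is then equivalent to the assertion that $\mathrm{sp}(\rho) \hookrightarrow \shape(\rho)$ is a trivial cofibration, so that restriction along it into the fibrant object $\cC$ is a trivial fibration; the whole theorem thereby reduces to a single statement comparing these two gaunt shapes.

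The archetypal case to isolate first is that of two vertically composable $2$-cells $\alpha\colon f \Rightarrow g$ and $\beta\colon g \Rightarrow h$, where the relevant shape is the hom-category $\shape(\rho)(x,y) = \catthree = [2]$ and the sub-shape is the spine $\mathrm{sp}(\rho)(x,y) = \cattwo \aamalg{\catone} \cattwo$. Here the crucial observation is that the strict pushout of categories $\cattwo \aamalg{\catone} \cattwo$ is already $[2]$, and that one leg of the defining span is the Dwyer map $0\colon \catone \to \cattwo$ of \cref{DwyerExamples}\eqref{basicDwyer}. Thus \cref{DwyerPushout} identifies $[2]$ with the homotopy pushout $\cattwo \aamalg{\catone}^{\mathbb{L}} \cattwo$ — precisely the Segal condition on the hom-$(\infty,1)$-category $\cC(x,y)$ — and the fiber of $\mathrm{Map}([2], \cC(x,y)) \to \mathrm{Map}(\mathrm{sp}(\rho), \cC(x,y))$ over $(\alpha,\beta)$ is contractible. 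This is the atomic instance that packages the $(\infty,1)$-categorical content of the present paper, and it already shows the shape of the argument: the composite exists and is unique up to a contractible choice exactly because a Dwyer-map pushout computes the homotopy pushout.

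For a general pasting scheme I would proceed by induction on the number of cells, organizing $\shape(\rho)$ as a strict colimit in which one further atomic cell (a globular $2$-cell, suitably whiskered) is attached to a smaller shape along the inclusion of a boundary path. The combinatorial heart of the matter is to arrange this order of attachment so that each gluing is a pushout along a Dwyer map, equivalently along a discretely flat faithful inclusion in the sense of \cref{defn:Dwyer-map} and the generalization discussed in the present paper; the attaching sub-shape must be a sieve carrying the required right-adjoint retraction. Granting this, iterating \cref{DwyerPushout} (equivalently, \cref{model indep main theorem} applied within the hom-$(\infty,1)$-categories, or levelwise in the chosen model) upgrades each strict pushout to an $(\infty,2)$-categorical homotopy pushout, so that $\mathrm{Map}(\shape(\rho),\cC)$ is computed as the homotopy limit of the atomic-cell data indexed by $\mathrm{sp}(\rho)$. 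The strict pasting theorem identifies the composite uniquely in $\Gaunt$, whence $\mathrm{sp}(\rho) \hookrightarrow \shape(\rho)$ is a weak equivalence, the restriction map is a trivial fibration, and the space of composites is contractible for arbitrary $\cC$; see \cite{HORR} for the framework in which this is carried out.

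The main obstacle I anticipate is exactly the inductive decomposition of the third step: it is not formal that an arbitrary pasting scheme — rather than the globular or composable-string examples — can be assembled by attaching cells along inclusions that are genuine sieves admitting the adjoint retractions of \cref{defn:Dwyer-map}, as opposed to along more general and homotopically ill-behaved inclusions. Verifying this uniformly is where the extra flexibility of the discretely flat maps isolated here, which strictly enlarge the class of Dwyer maps, is expected to be indispensable. A secondary point requiring care is the passage from the $(\infty,1)$-categorical statement of \cref{DwyerPushout} to the $(\infty,2)$-categorical conclusion, which I would handle by reducing the $2$-dimensional gluings to $(\infty,1)$-categorical pushouts of hom-objects via the Segal-type description of the chosen model.
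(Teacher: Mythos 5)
Your outline follows essentially the same route as the paper, which establishes this theorem in the companion work \cite{HORR}: working in Lurie's model of $(\infty,2)$-categories as categories enriched in quasi-categories, one builds the pasting-scheme shape and the free $2$-category it generates by attaching atomic $2$-cells one at a time, observes that each attachment is, at the level of hom-categories, a pushout along a span with one leg a Dwyer map, and invokes \cref{DwyerPushout} (via \cref{model indep main theorem}) to identify the $1$-categorical and $(\infty,1)$-categorical pushouts, so that the two $(\infty,2)$-categorical shapes are equivalent and the space of composites is the contractible fiber of a trivial fibration of mapping spaces. Regarding your anticipated obstacle: the bottom-attachment pushouts arising from pasting schemes are along genuine Dwyer maps (with the other leg frequently faithful and injective on objects, so that \cref{AnodyneDwyer} even yields inner anodyne comparison maps), and the extra generality of discretely flat inclusions, while true, is not needed for this application.
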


To prove this, we make use of Lurie's model structure of $(\infty,2)$-categories as categories enriched over quasi-categories \cite{LurieGoodwillie}. In this model, a \emph{pasting diagram} is a simplicially enriched functor out of the free simplicially enriched category defined by gluing together the objects, atomic 1-cells, and atomic 2-cells of a pasting scheme, while the composites of these cells belong to the homotopy coherent diagram indexed by the nerve of the free 2-category generated by the pasting scheme. 

This pair of $(\infty,2)$-categories has a common set of objects so the difference lies in their hom-spaces. The essential difference between the procedure of attaching an atomic 2-cell along the bottom of a pasting diagram or along the bottom of the free 2-category it generates is the difference between forming a pushout of hom-categories in the category of $(\infty,1)$-categories or in the category of 1-categories. Since one of the functors in the span that defines the pushout under consideration is a Dwyer map, \cref{DwyerPushout} proves that the resulting $(\infty,2)$-categories are equivalent. 

In \S\ref{sec:pushout}, we analyze 1-categorical pushouts of Dwyer maps. In \S\ref{sec:simp-pushout}, we extend these observations to pushouts of simplicial categories involving a Dwyer map between 1-categories as one leg of the span, axiomatize the classes of functors that are well-behaved with respect to simplicial pushouts, and prove \cref{model indep main theorem}. In \S\ref{sec:main-theorem}, we deduce \cref{DwyerPushout} as a corollary and consider a further special case \cref{AnodyneDwyer}, which observes that the canonical comparison between the pushout of nerves of categories and the nerve of the pushout is inner anodyne, provided that one of the functors in the span is a Dwyer map and the other is an injective on objects faithful functor.

\section{Dwyer pushouts}\label{sec:pushout}

We now establish some notation that we will freely reference in the remainder of this paper. By \cref{defn:Dwyer-map}, a Dwyer map $I \colon \cA \hookrightarrow \cB$ uniquely determines a functor 
$\chi \colon \cB \to \cattwo$ that classifies the sieve $\cA \coloneqq \chi^{-1}(0)$ and its complementary cosieve $\cV \coloneqq \chi^{-1}(1)$
\begin{equation*}
\begin{tikzcd} \cV \arrow[r, hook] \arrow[d] \arrow[dr, phantom, "\lrcorner" very near start] & \cB \arrow[d, "\chi"] & \arrow[dl, phantom, "\llcorner" very near start] \cA \arrow[l, hook'] \arrow[d] \\ \catone \arrow[r, hook, "1"'] & \cattwo & \catone \arrow[l, hook', "0"]
\end{tikzcd}
\end{equation*}
 as well as a right adjoint left inverse adjunction $(I \dashv R,\varepsilon \colon IR \Rightarrow \id_\cW)$ associated to the inclusion of $\cA$ into the minimal cosieve $\cA \subset\cW\subset\cB$. This data may be summarized by the diagram
\begin{equation}\label{eq:DwyerData}
\begin{tikzcd}
& & \varnothing  \arrow[dl] \arrow[dr] \arrow[dd, phantom, "\rotatebox{135}{$\ulcorner$}" very near start]  \\ & \cU \arrow[dl, hook'] \arrow[dr, hook] \arrow[dd, phantom, "\rotatebox{135}{$\ulcorner$}" very near start]& 
  & \cA \arrow[dl, hook'] \arrow[d]  \arrow[ddl, phantom, "\llcorner" very near start] \\
\cV \arrow[d] \arrow[dr, hook] \arrow[ddr, phantom, "\lrcorner" very near start]  & & \cW \arrow[dl, hook'] \arrow[ur, dashed, "R", bend left,  "\rotatebox{45}{$\top$}"' outer sep=-2pt]& \catone \arrow[ddll, "0", hook'] \\
 \catone \arrow[dr, hook, "1"'] & \cB \arrow[d] & ~ &  \\& \cattwo
\end{tikzcd}
\end{equation}
 in which $\cU \coloneqq\cW\cap\cV\cong \cW\backslash\cA $. Consider the pushout of a Dwyer map along an arbitrary functor $F \colon \cA \to \cC$. 
\begin{equation*}
\begin{tikzcd}[sep=small] & [+10pt] ~& \cA \arrow[dl, hook', "I"'] \arrow[dr, phantom, "\ulcorner" very near end] \arrow[rr, "F"] \arrow[dd] & & \cC \arrow[dd] \arrow[dl, "J", hook'] \\ \cV \arrow[r, hook] \arrow[dd] &  \cB  \arrow[dd, "\chi"'] \arrow[rr, "G"' near end, crossing over] & & \cD  \\ 
& & \catone \arrow[rr, equals] \arrow[dl, hook', "0"'] & & \catone \arrow[dl, hook', "0"] \\ \catone \arrow[r, hook, "1"] &  \cattwo \arrow[rr, equals] & & \cattwo \arrow[from=uu, "\pi" near start, dashed, crossing over] \end{tikzcd}
\end{equation*}
The induced functor $\pi \colon \cD \to \cattwo$ partitions the objects of $\cD$ into the two fibers $\ob(\pi^{-1}(0)) \cong \ob\cC$ and $\ob(\pi^{-1}(1))\cong \ob\cV$ and prohibits any morphisms from the latter to the former.

The right adjoint left inverse adjunction $(I \dashv R,\varepsilon \colon IR \Rightarrow \id_\cW)$ associated to the inclusion of $\cA$ into the minimal cosieve $\cA \subset\cW\subset\cB$  pushes out to define a right adjoint left inverse $(J \dashv S, \nu \colon JS \Rightarrow \id_{\cY})$ to the inclusion of $\cC$ into the minimal cosieve $\cC \subset\cY\subset\cD$.
\[
\begin{tikzcd}
 \cA \arrow[r, "F"] \arrow[d, hook, bend right, "I"'] \arrow[d, phantom, "\dashv"] \arrow[dr, phantom, "\ulcorner" very near end] & \cC \arrow[d, hook, bend right, "J"'] \arrow[d, phantom, "\dashv"] &  \cA \arrow[r, "F"] \arrow[d, hook, "I"'] \arrow[dr, phantom, "\ulcorner" very near end] & \cC \arrow[d, hook, "J"] \arrow[ddr, bend left, equals] & &\cA \arrow[r, "F"] \arrow[d, hook, "I"'] \arrow[dr, phantom, "\ulcorner" very near end] & \cC \arrow[d, hook, "J"] \arrow[dr,  "\Delta"] \\
 \cW \arrow[r, "G"'] \arrow[d, hook] \arrow[dr, phantom, "\ulcorner" very near end] \arrow[u, bend right, "R"'] & \cY \arrow[d, hook] \arrow[u, bend right, "S"'] & \cW \arrow[r, "G"'] \arrow[dr, "R"'] & \cY \arrow[dr, dashed, "S"] & & \cW \arrow[dr, "\varepsilon"'] \arrow[r, "G"'] & \cY \arrow[dr, dashed, "\nu"] & \cC^\cattwo \arrow[d, "J^\cattwo"] \\ \cB \arrow[r, "G"'] & \cD & & \cA \arrow[r, "F"'] & \cC & & \cW^\cattwo \arrow[r, "G^\cattwo"'] & \cY^\cattwo
\end{tikzcd}
\]

These observations explain the closure of Dwyer maps under pushout and furthermore can be used to explicitly describe the structure of the category $\cD$ defined by the pushout of a Dwyer map, as proven in \cite[Proof of Lemma 2.5]{BMOOPY}; cf.\ also \cite[Construction 1.2]{SchwedeOrbispaces} and \cite[\textsection7.1]{AraMaltsiniotisVers}.

\begin{prop}
\label{prop:PushoutDwyerHom}
The objects in the pushout category $\cD$ are given by
\[
\begin{tikzcd} \ob\cC\amalg\ob\cV \arrow[r, iso] & \ob\cD
\end{tikzcd}
\]
while the  hom sets are given by
\[
\begin{tikzcd}[row sep=tiny]
\cC(c,c') \cong \cD(c,c')  & \cV(v,v') \cong \cB(v,v') \cong \cD(v,v')  &  \cC(c,Su) \arrow[r, "\nu_{u} \circ (-)", iso'] & \cD(c,u) \\ & & f \rar[mapsto] &\hat f
\end{tikzcd}
\]
for all $c,c' \in \cC$, $v,v' \in \cV$, and $u \in \cU$, and are empty otherwise. Functoriality of the inclusions $J$ and $G$ defines the composition on the image of $\cC$ and $\cV$. For objects $c,c' \in \cC$ and $u,u' \in \cU$, the composition map
\[
\begin{tikzcd} \cD(u,u') \times \cD(c, u) \times \cD(c',c) \arrow[r, "\circ"] & \cD(c', u')  \\
\arrow[u, iso]
\cD(u,u') \times \cC(c,Su) \times \cC(c',c) \arrow[d, "S \times \id"'] \\ \cC(Su,Su') \times \cC(c, Su) \times \cC(c',c)  \arrow[r, "\circ"]  & \cC(c',Su')  \arrow[uu, iso']
\end{tikzcd}
\]
is the unique map making the diagram commute.\footnote{Note if $u \in \cU$ and $v \in \cV\backslash\cU$, then $\cB(u,v)=\varnothing$.}
\end{prop}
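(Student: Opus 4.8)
The plan is to read off the structure of $\cD$ from the data assembled just above the statement: by \cref{pushoutDwyer} the pushed-out functor $J \colon \cC \hookrightarrow \cD$ is again a Dwyer map, and it comes equipped with the pushed-out right adjoint left inverse $(J \dashv S,\, \nu \colon JS \Rightarrow \id_{\cY})$ to the inclusion of $\cC$ into its minimal cosieve $\cC \subset \cY \subset \cD$. First I would settle the objects. Since $\ob \colon \Cat \to \Set$ is a left adjoint (to the codiscrete-category functor) it preserves pushouts, so $\ob\cD \cong \ob\cB \amalg_{\ob\cA} \ob\cC \cong \ob\cV \amalg \ob\cC$; applying $\ob$ instead to the pushout square $\cY \cong \cW \amalg_{\cA} \cC$ and using $\ob\cW = \ob\cA \amalg \ob\cU$ gives $\ob\cY \cong \ob\cU \amalg \ob\cC$.

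For the hom-sets I would treat each case through the Dwyer structure of $J$. Because $J$ exhibits $\cC$ as a full sieve in $\cD$ classified by $\pi$, with complementary full cosieve carried isomorphically onto $\cV$ by $G$, one obtains $\cC(c,c') \cong \cD(c,c')$ and $\cV(v,v') \cong \cB(v,v') \cong \cD(v,v')$, while the sieve condition forbids morphisms from the cosieve into the sieve, so $\cD(v,c) = \varnothing$. The mixed hom-sets are controlled by the minimal cosieve: since $\ob\cY = \ob\cU \amalg \ob\cC$, any $v \in \cV \setminus \cU$ lies outside $\cY$ and hence is the codomain of no arrow out of $\cC$, giving $\cD(c,v) = \varnothing$; and for $u \in \cU$ the adjunction $J \dashv S$ with counit $\nu$ supplies the natural bijection $\cD(c,u) = \cY(Jc,u) \cong \cC(c,Su)$ sending $f$ to $\hat f = \nu_u \circ Jf$, which is exactly the stated isomorphism $\nu_u \circ (-)$.

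It then remains to pin down the composition, and the only case not already forced by the full-subcategory inclusions of $\cC$ and $\cV$ is the mixed triple composite $\cD(u,u')\times\cD(c,u)\times\cD(c',c)\to\cD(c',u')$. I would verify that the displayed square commutes by a single naturality computation: writing a typical element as $(w,\hat g, Jk)$ with $g \colon c \to Su$ and $k \colon c' \to c$ in $\cC$, naturality of the counit $\nu$ at $w \colon u \to u'$ reads $w \circ \nu_u = \nu_{u'} \circ JSw$, whence $w\circ\hat g\circ Jk = w\circ\nu_u\circ Jg\circ Jk = \nu_{u'}\circ J(Sw\circ g\circ k)$. Under the bijection $\cC(c',Su')\cong\cD(c',u')$ this identifies the composite with $Sw\circ g\circ k$, which is precisely the image of the triple under $S\times\id$ followed by composition in $\cC$; functoriality of $S$ closes the square.

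The part requiring the most care is not any of these computations but the justification of the underlying full-embedding and emptiness claims, namely that the complementary cosieve is genuinely (isomorphic to) $\cV$ and that $\cY$ is a full subcategory of $\cD$ with the computed object set, so that the pushout manufactures no unexpected morphisms among the $\cV$-objects nor from $\cC$ into $\cV\setminus\cU$. These are exactly packaged into the assertion of \cref{pushoutDwyer} that $J$ is a Dwyer map, together with the object computation above; alternatively one can sidestep them by checking directly that the collage category on $\ob\cC\amalg\ob\cV$ built from the profunctor $(c,u)\mapsto\cC(c,FRu)$ satisfies the universal property of the pushout, which is the route taken in \cite{BMOOPY}. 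Granting the Dwyer structure of $J$, everything else reduces to the bookkeeping of hom-sets and the lone naturality identity recorded above.
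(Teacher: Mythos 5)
Your local computations are right and match the scaffolding the paper assembles before the statement: the bijection $f \mapsto \nu_u \circ Jf$ is the correct adjunction transpose for the rali $(J \dashv S, \nu)$, and your single naturality identity $w \circ \nu_u \circ Jg \circ Jk = \nu_{u'} \circ J(Sw \circ g \circ k)$ is exactly what makes the composition square commute. But the load-bearing step of your primary route is circular. The paper does not derive this proposition from \cref{pushoutDwyer}; it cites the explicit computation in \cite[Proof of Lemma 2.5]{BMOOPY} (cf.\ Schwede and Ara--Maltsiniotis), i.e., precisely the build-the-collage-and-verify-the-universal-property argument that you relegate to an ``alternative.'' That order of logic is forced: the bare statement of \cref{pushoutDwyer} says only that $J$ is a Dwyer map. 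It does \emph{not} assert that $G$ carries $\cV$ fully faithfully onto the complementary cosieve of $\cC$ in $\cD$, that $\cW \amalg_{\cA} \cC$ maps to $\cD$ as a full subcategory which is the \emph{minimal} cosieve $\cY$, or that the rali for $J$ is the pushed-out $(S,\nu)$. The universal-property observations preceding the proposition yield only the object-level partition via $\pi$ and the prohibition of arrows from the $\cV$-fiber to the $\cC$-fiber; nothing formal there rules out the pushout identifying parallel $\cV$-arrows, or creating arrows $c \to u$ other than those of the form $\nu_u \circ Jf$. In Thomason's and \cite{BMOOPY}'s treatments these fullness and minimality facts are established \emph{by} the explicit description---that is, this proposition is proved first and \cref{pushoutDwyer} deduced from it, not the reverse. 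So your claim that the delicate points are ``exactly packaged into the assertion of \cref{pushoutDwyer}'' is not available.

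The fix is the one you gesture at but do not execute: define the collage category on $\ob\cC \amalg \ob\cV$ with hom-sets as in the statement, composition given through $S$ (on objects of $\cU$, $Su = FRu$) and $\nu$, check associativity---this is where your naturality computation does its real work, as a well-definedness check rather than a consequence of an already-given adjunction---and then verify the universal property of the pushout directly against an arbitrary cocone. With that verification carried out, your adjunction and naturality identities are correct ingredients, your object count via $\ob$ preserving pushouts is fine, and the emptiness claims ($\cD(v,c) = \varnothing$ by the sieve condition, $\cD(c,v) = \varnothing$ for $v \in \cV \setminus \cU$ by minimality of $\cY$) follow as you state. As written, however, the proposal assumes in its main line exactly the hom-set control it is supposed to establish.
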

To summarize, $J$ and $G$ define fully-faithful inclusions
\begin{equation*}
\begin{tikzcd} \cV \arrow[d] \arrow[r, hook] \arrow[dr, phantom, "\lrcorner" very near start] & \cD \arrow[d, "\pi"] & \cC \arrow[l, hook'] \arrow[d] \arrow[dl, phantom, "\llcorner" very near start] \\ \catone \arrow[r, hook, "1"] & \cattwo & \catone \arrow[l, hook', "0"']
\end{tikzcd}
\end{equation*}
that are jointly surjective on objects. In particular, we may identify $\cV$ with the complementary cosieve of $\cC$ in $\cD$. 

\section{Pushouts in simplicial categories}\label{sec:simp-pushout}

By \emph{simplicial category} we always mean \emph{simplicially enriched category} as opposed to a simplicial object in $\Cat$. There is a fully faithful inclusion $\sCat \hookrightarrow \Cat^{\DDelta^\op}$, identifying a simplicial category with an identity-on-objects simplicial object. 

The following model structure on $\sCat$ is due to Bergner \cite{Bergner:MS}, though Lurie \cite[A.3.2.4, A.3.2.25]{HTT} observed that the Bergner model structure is left proper and combinatorial.

\begin{defn}[Bergner model structure]
The category $\sCat$ of simplicially enriched categories admits a proper, combinatorial model structure in which:
\begin{itemize}
    \item A map $f \colon \cC\to \cD$ is a \textbf{weak equivalence} just when:
\begin{enumerate}[label=(W\arabic*), ref=W\arabic*]
    \item For each pair of objects $x,y$, the map $\cC(x,y) \to \cD(fx,fy)$ is a weak homotopy equivalence of simplicial sets, and \label{DK local equiv} 
    \item the functor $\pi_0 f \colon \pi_0 \cC \to \pi_0 \cD$ is essentially surjective. \label{DK global equiv}
\end{enumerate}
\item A map $f \colon \cC\to \cD$ is a \textbf{fibration} just when:
\begin{enumerate}[label=(F\arabic*), ref=F\arabic*]
    \item For each pair of objects $x,y$, the map $\cC(x,y) \to \cD(fx,fy)$ is a Kan fibration, and \label{DK local fib}
    \item the functor $\pi_0 f \colon \pi_0 \cC \to \pi_0 \cD$ is an isofibration.
    \label{DK global fib}
\end{enumerate}
\end{itemize}
If $\cC$ is a simplicial category, then $\pi_0 \cC$ is the ordinary category obtained by taking path components of each hom-simplicial set.
We call simplicial functors satisfying \eqref{DK local equiv} \emph{fully faithful} (meaning of course in the homotopical sense), and functors satisfying \eqref{DK global equiv} \emph{essentially surjective}.
\end{defn}

The constant diagram functor $\Cat \to \Cat^{\DDelta^\op}$, given by precomposition with $\DDelta^\op \to \catone$, factors through the full subcategory inclusion $\sCat \hookrightarrow \Cat^{\DDelta^\op}$. Write $\cstar \colon \Cat \to \sCat$ for the induced full inclusion, which identifies categories as those simplicial categories with discrete hom simplicial sets. 

\begin{lem}
\label{ConstColims}
The functor $\cstar \colon \Cat \to \sCat$ preserves limits and colimits.
\end{lem}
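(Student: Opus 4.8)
The plan is to exhibit $\cstar$ as simultaneously a left adjoint and a right adjoint, from which both preservation statements follow formally. Everything descends from the enriching categories: the discrete simplicial set functor $\disc \colon \Set \to \sSet$ sits in a chain of adjunctions $\pi_0 \dashv \disc \dashv \operatorname{ev}_0$, where $\pi_0 \colon \sSet \to \Set$ takes path components and $\operatorname{ev}_0 \colon \sSet \to \Set$ evaluates at $[0] \in \DDelta$. The enriched functor of the statement is the change of base $\cstar = \disc_*$ induced by applying $\disc$ to hom-objects.

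First I would record that each of these three functors is strong monoidal for the cartesian structures: $\disc$ preserves finite products since $\disc(S \times T) \cong \disc S \times \disc T$; $\pi_0$ preserves finite products of simplicial sets; and $\operatorname{ev}_0$ preserves products since these are computed levelwise, so $(X \times Y)_0 = X_0 \times Y_0$. Because in each of the two adjunctions the left adjoint is strong monoidal, doctrinal adjunction equips each right adjoint with a canonical lax monoidal structure for which the unit and counit are monoidal; thus both $\pi_0 \dashv \disc$ and $\disc \dashv \operatorname{ev}_0$ are monoidal adjunctions, i.e.\ adjunctions internal to the $2$-category of monoidal categories, lax monoidal functors, and monoidal transformations.

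Next I would invoke the $2$-functoriality of change of base: applying a lax monoidal functor $V \to W$ homwise defines a functor between the associated categories of enriched categories, and this assignment is $2$-functorial, hence carries monoidal adjunctions to adjunctions. Under this construction $\cstar = \disc_*$, while $(\pi_0)_*$ and $(\operatorname{ev}_0)_*$ are the functors $\sCat \to \Cat$ sending a simplicial category $\cC$ to the $1$-categories with the same objects and with hom-sets $\pi_0\,\cC(x,y)$ and $\cC(x,y)_0$ respectively. The two monoidal adjunctions above therefore yield $(\pi_0)_* \dashv \cstar \dashv (\operatorname{ev}_0)_*$. Being a right adjoint, $\cstar$ preserves limits; being a left adjoint, it preserves colimits, which is the claim.

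The only point requiring care is the passage from the monoidal adjunctions on the enriching categories to adjunctions on enriched categories; here I would either cite the standard change-of-base formalism or, if a self-contained argument is preferred, verify the two hom-set bijections directly. In the latter case one checks that a simplicial functor $\cstar\cA \to \cD$ consists of an object assignment together with a simplicial map $\disc\bigl(\cA(x,y)\bigr) \to \cD(Fx,Fy)$ for each pair $x,y$, which by $\disc \dashv \operatorname{ev}_0$ applied homwise corresponds exactly to an ordinary functor $\cA \to (\operatorname{ev}_0)_*\cD$; dually, by $\pi_0 \dashv \disc$, a simplicial functor $\cC \to \cstar\cA$ corresponds to an ordinary functor $(\pi_0)_*\cC \to \cA$. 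That these bijections respect composition and units is immediate from the strong monoidality of $\disc$, $\pi_0$, and $\operatorname{ev}_0$, which is what guarantees the homwise correspondences assemble into genuine (enriched, respectively ordinary) functors.
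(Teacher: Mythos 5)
Your proof is correct, but it takes a genuinely different route from the paper's. The paper argues by factorization: $\cstar$ is the restriction of the constant-diagram functor $\Cat \to \Cat^{\DDelta^\op}$ (precomposition along $\DDelta^\op \to \catone$), which preserves all limits and colimits, and the full inclusion $\sCat \hookrightarrow \Cat^{\DDelta^\op}$ reflects whatever limits and colimits exist; since all the relevant cones land in the full subcategory, $\cstar$ preserves them --- a two-line argument using only the embedding already set up in that section. You instead exhibit $\cstar = \disc_*$ as a two-sided adjoint, $(\pi_0)_* \dashv \cstar \dashv (\operatorname{ev}_0)_*$, by pushing the adjunctions $\pi_0 \dashv \disc \dashv \operatorname{ev}_0$ through the change-of-base $2$-functor, with doctrinal adjunction supplying the monoidal structure since the left adjoints $\pi_0$ and $\disc$ are strong monoidal for the cartesian structures. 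All the ingredients check out: the only assertion deserving a word is that $\pi_0$ preserves finite products of simplicial sets (true because $\pi_0 X$ is a reflexive coequalizer of $X_1 \rightrightarrows X_0$, and such sifted colimits commute with finite products in $\Set$), and your fallback direct verification of the two hom-set bijections is also sound. The trade-off: your argument costs the change-of-base formalism (or the homwise verification), but buys strictly more than the lemma states --- it produces explicit adjoints on both sides, with $(\pi_0)_*$ being precisely the functor $\cC \mapsto \pi_0\cC$ that the paper uses to formulate the Bergner weak equivalences and fibrations, and $(\operatorname{ev}_0)_*$ the underlying-category functor; the paper's proof yields only the preservation statement, though with essentially no machinery.
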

\begin{proof}
The precomposition functor $\Cat \to \Cat^{\DDelta^\op}$ preserves all limits and colimits, while the full inclusion $\sCat \to \Cat^{\DDelta^\op}$ reflects them. The conclusion follows.
\end{proof}

Our key technical result is the following proposition, which observes that when $I$ is a Dwyer map, the functor $\cstar(I)$ of simplicial categories is a \emph{flat map} in the terminology of \cite[B.9]{HillHopkinsRavenel:ONEKIO} or an \emph{$h$-cofibration} in the terminology of \cite[1.1]{BataninBerger:HTAPM} relative to the Bergner model structure.

\begin{prop}
\label{prop we preservation}
If $I \colon \cA \hookrightarrow \cB$ is a Dwyer map, then \[\cstar\cB \aamalg{\cstar\cA} (-) \colon \slicel{\cstar\cA}{\sCat} \to \slicel{\cstar\cB}{\sCat} \] preserves weak equivalences.
\end{prop}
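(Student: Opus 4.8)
The plan is to unwind the statement and reduce to the two defining conditions of a Bergner weak equivalence. An object of $\slicel{\cstar\cA}{\sCat}$ is a simplicial category under $\cstar\cA$, a morphism between two such is a simplicial functor commuting with the structure functors, and such a morphism is a weak equivalence in the coslice exactly when its underlying functor is a Bergner weak equivalence. So I must show: if $f \colon \cP \to \cQ$ is a Dwyer--Kan equivalence of simplicial categories under $\cstar\cA$, then the induced map on pushouts
\[
\bar f \colon \cE \coloneqq \cstar\cB \aamalg{\cstar\cA} \cP \longrightarrow \cstar\cB \aamalg{\cstar\cA} \cQ \eqqcolon \cE'
\]
is again a Dwyer--Kan equivalence, i.e.\ satisfies \eqref{DK local equiv} and \eqref{DK global equiv}.

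The crucial step is to compute the simplicial pushout $\cE$ \emph{levelwise} and thereby reduce to \cref{prop:PushoutDwyerHom}. Regarding a simplicial category as an identity-on-objects simplicial object in $\Cat$, the span defining $\cE$ has object sets that are constant in the simplicial direction; since $\ob \colon \Cat \to \Set$ preserves colimits, the levelwise pushout in $\Cat^{\DDelta^\op}$ has constant object set with structure maps that are identities on objects, hence lies in $\sCat$. As the inclusion $\sCat \hookrightarrow \Cat^{\DDelta^\op}$ reflects colimits (\cref{ConstColims}), this levelwise pushout is the pushout $\cE$ in $\sCat$, and at each level $[n]$ it is the $1$-categorical pushout of the Dwyer map $I$ along $\cA \to \cP_n$. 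Feeding this into \cref{prop:PushoutDwyerHom} identifies the objects of $\cE$ with $\ob\cP \amalg \ob\cV$ and, writing $\phi \colon \ob\cA \to \ob\cP$ for the object assignment and $R \colon \cW \to \cA$ for the retraction, identifies the hom-spaces as
\[
\cE(p,p') \cong \cP(p,p'), \qquad \cE(v,v') \cong \cstar\cB(v,v'), \qquad \cE(p,u) \cong \cP(p, \phi R u)
\]
for $u \in \cU$, with all remaining hom-spaces empty; the last isomorphism is natural in $[n]$ because the counit exhibiting it in \cref{prop:PushoutDwyerHom} is natural in the functor being pushed out, so it assembles into an isomorphism of simplicial sets.

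With this description in hand the two conditions are immediate. Under the above identifications, and using that $f$ commutes with the structure functors (so $f\phi$ is the object assignment for $\cQ$), the map $\bar f$ is $f$ on the hom-spaces $\cP(p,p')$, the identity on the hom-spaces $\cstar\cB(v,v')$, and the map $\cP(p,\phi R u) \to \cQ(fp, f\phi R u)$ induced by $f$ on the mixed hom-spaces. Each of these is a weak homotopy equivalence because $f$ satisfies \eqref{DK local equiv}, which gives \eqref{DK local equiv} for $\bar f$. For \eqref{DK global equiv}, the objects coming from $\cV$ are hit identically, while every object of $\cQ$ becomes isomorphic in $\pi_0\cE'$ to one in the image of $\bar f$ because $f$ satisfies \eqref{DK global equiv} and the inclusion $\cQ \hookrightarrow \cE'$ is fully faithful, so equivalences in $\pi_0 \cQ$ persist in $\pi_0 \cE'$.

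I expect the main obstacle to be the second paragraph: justifying that the pushout in $\sCat$ may be computed one simplicial level at a time, and that the hom-space identifications of \cref{prop:PushoutDwyerHom}---in particular the isomorphism $\cE(p,u) \cong \cP(p,\phi R u)$ built from the pushed-out counit---are natural in the simplicial direction, so that they hold as identifications of simplicial sets rather than merely levelwise. Everything else is formal once this structural description is established.
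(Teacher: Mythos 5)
Your proposal is correct and follows essentially the same route as the paper's proof: compute the pushout levelwise as $1$-categorical pushouts, apply \cref{prop:PushoutDwyerHom} (noting functoriality of its hom-set identifications in the simplicial direction) to verify condition \eqref{DK local equiv} hom-by-hom, and deduce \eqref{DK global equiv} from the object decomposition $\ob\cP \amalg \ob\cV$. The only difference is expository: you spell out the reduction to levelwise pushouts via constant object sets and reflection of colimits along $\sCat \hookrightarrow \Cat^{\DDelta^\op}$, which the paper asserts more tersely ($\cD_n = \cB \amalg_{\cA} \cC_n$), and the paper phrases essential surjectivity via a square involving $\cC' \amalg \cstar\cV$ rather than your fully-faithful-inclusion argument---these are equivalent.
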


\begin{proof}
Consider a composable pair of simplicial functors $\cstar\cA \xrightarrow{F} \cC' \xrightarrow{M} \cC$ and form the following pushouts:
\[ \begin{tikzcd}
\cstar\cA \arrow[dr, phantom, "\ulcorner" very near end] \dar \rar{F} & \cC' \rar{M} \dar \arrow[dr, phantom, "\ulcorner" very near end] & \cC \dar \\ 
\cstar\cB \rar & \cD' \rar{H} & \cD
\end{tikzcd} \]
When $M$ is a weak equivalence in the Bergner model structure,  we wish to show that the induced map $H \colon \cD' \to \cD$ between the pushouts is as well.

As in \cref{prop:PushoutDwyerHom}, we identify $\ob\cD'$ with $\ob\cC' \amalg \ob\cV$ and similarly $\ob\cD = \ob\cC \amalg \ob\cV$.
We regard the simplicial categories $\cD', \cD$ as simplicial objects $\cD'_\bullet, \cD_\bullet$ via the inclusion $\sCat \hookrightarrow \Cat^{\DDelta^\op}$.
For each $n$, we have $\cD_n = \cB \amalg_{\cA} \cC_n$ and similarly for $\cD_n'$. We have already computed the hom sets of these categories in \cref{prop:PushoutDwyerHom}, and the descriptions there are functorial in the $\cC$-variable.
Thus we have 
\[ \begin{tikzcd}
\cC'(c,c') \rar{\simeq} \dar{\cong} & \cC(Mc, Mc') \dar{\cong} & \cC'(c,FRu) \rar{\simeq} \dar{\cong} & \cC(Mc, MFRu) \dar{\cong} \\
\cD'(c,c') \rar & \cD(Hc,Hc') & \cD'(c,u) \rar & \cD(Hc,Hu)
\end{tikzcd} \]
for $c,c' \in \cC'$ and $u\in \cU$.
Meanwhile, for $v,v' \in \cV$ we have that both $\cD(v,v')$ and $\cD'(v,v')$ are isomorphic to the discrete simplicial set $\cV(v,v')$.
Finally, the hom simplicial sets $\cD'(c,v')$, $\cD(Hc,v')$, $\cD(v,c)$, and $\cD(v,Hc)$ are all empty for $c\in \cC'$, $v\in \cV$, $v' \in \cV \backslash \cU$. 
Thus $H$ is fully faithful.

For essential surjectivity of $H$, notice that we have a commutative square of functors
\[ \begin{tikzcd}
\cC' \amalg \cstar\cV \rar{M \amalg \id} \dar & \cC \amalg \cstar\cV \dar \\
\cD' \rar{H} & \cD
\end{tikzcd} \]
where the vertical maps are bijective on objects and the top map is essentially surjective.
It follows that $H$ is essentially surjective as well.
\end{proof}

Our main results hold not just for Dwyer maps but for arbitrary functors between 1-categories that satisfy the property established in \cref{prop we preservation} plus some injectivity conditions. The following terminology highlights the required properties. 

\begin{defn}\label{discretely flat}
A functor $I \colon \cA \to \cB$ between 1-categories is \textbf{discretely flat} if the simplicial functor $\cstar(I)$ is flat, i.e., if \[\cstar\cB \aamalg{\cstar\cA} (-) \colon \slicel{\cstar\cA}{\sCat} \to \slicel{\cstar\cB}{\sCat} \] preserves Bergner weak equivalences. If, in addition, $I$ is injective on objects, we call it a \textbf{discretely flat cofibration}, and if it is both injective on objects and faithful, we call it a \textbf{discretely flat inclusion}.
\end{defn}

Dwyer maps are discretely flat inclusions, but such functors aren't the only examples.

\begin{ex}\label{discretely flat examples}
$\quad$
\begin{enumerate}[label=(\roman*),ref=\roman*]
\item Since the passage to opposite categories commutes with all of the structures involved in \cref{discretely flat}, {\bf co-Dwyer maps}, whose opposites are Dwyer maps, are also discretely flat inclusions.
\item As flat maps are closed under retracts --- see \cite[B.11]{HillHopkinsRavenel:ONEKIO} or \cite[1.3]{BataninBerger:HTAPM} --- Cisinski's {\bf pseudo-Dwyer maps} \cite{CisinskiDwyer} are also discretely flat inclusions.
\item The inclusion of $\catone \amalg \catone$ into the cospan category $(\catone \amalg \catone)^\triangleright$ is a discretely flat inclusion. Indeed, the hom simplicial sets of $(\catone \amalg \catone)^{\triangleright} \amalg_{(\catone \amalg \catone)} \cC$ are readily computed by hand in terms of those for $\cC$, and a variation of the proof of \cref{prop we preservation} gives the result.\label{df example cospan shape}
\end{enumerate}
\end{ex}

Note not all functors of the form $\cA \to \cA^{\triangleright}$ are discrete flat inclusions. In light of \cref{model indep main theorem}, the left-hand map of \eqref{eq:poset-non-example} gives a counter-example. An interesting problem is to characterize the class of discretely flat inclusions. 

The fact that Dwyer pushouts are homotopy pushouts now follows from a general fact:  in a left proper model category, a pushout where one leg is a flat map is automatically a homotopy pushout (see \cite[\S1.5]{BataninBerger:HTAPM}).

\begin{prop}
\label{simp cat theorem}
Suppose $I \colon \cA \to \cB$ is discretely flat. Then for any functor $F \colon \cstar \cA \to \cC$ of simplicial categories, the pushout $\cstar \cB \aamalg{\cstar \cA} \cC$ is a homotopy pushout.
\end{prop}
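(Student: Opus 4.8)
The plan is to deduce this from the general principle, recorded in \cite[\S1.5]{BataninBerger:HTAPM} (see also \cite[B.9]{HillHopkinsRavenel:ONEKIO}), that in a left proper model category any pushout one of whose legs is a flat map is automatically a homotopy pushout. There are exactly two inputs to check. First, the Bergner model structure on $\sCat$ is left proper; this is part of the properness and combinatoriality observed by Lurie \cite[A.3.2.4, A.3.2.25]{HTT}. Second, the leg $\cstar I \colon \cstar\cA \to \cstar\cB$ must be flat --- but this is precisely the hypothesis that $I$ is discretely flat (\cref{discretely flat}), which for Dwyer maps was established in \cref{prop we preservation}. Granting the cited fact, the conclusion is then immediate.

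For completeness I would also spell out the short argument underlying the cited fact, since its mechanics clarify exactly where flatness is used and where left properness is used. Factor the other leg $F \colon \cstar\cA \to \cC$ as a cofibration $k \colon \cstar\cA \rightarrowtail \cC'$ followed by a weak equivalence $s \colon \cC' \wto \cC$ in the Bergner model structure, and then glue $\cstar\cB$ along $\cstar\cA$ against this factorization. This produces two pushout squares: the left-hand one with pushout $\cstar\cB \aamalg{\cstar\cA} \cC'$ and the right-hand one with pushout $\cstar\cB \aamalg{\cstar\cA} \cC$, together with a canonical comparison map $t \colon \cstar\cB \aamalg{\cstar\cA} \cC' \to \cstar\cB \aamalg{\cstar\cA} \cC$ between them.

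Two observations then finish the argument. On one hand, the left-hand square is a pushout along the cofibration $k$, so left properness guarantees it is a homotopy pushout; and since $s$ is a weak equivalence, the span $\cstar\cB \leftarrow \cstar\cA \xrightarrow{k} \cC'$ has the same homotopy pushout as the original span $\cstar\cB \leftarrow \cstar\cA \xrightarrow{F} \cC$, namely $\cstar\cB \aamalg{\cstar\cA} \cC'$. On the other hand, the comparison $t$ is exactly the image of the weak equivalence $s$ under the cobase-change functor $\cstar\cB \aamalg{\cstar\cA} (-) \colon \slicel{\cstar\cA}{\sCat} \to \slicel{\cstar\cB}{\sCat}$, which preserves weak equivalences precisely because $\cstar I$ is flat. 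Hence $t$ is a weak equivalence, so the strict pushout $\cstar\cB \aamalg{\cstar\cA} \cC$ represents the homotopy pushout, as claimed.

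The only real subtlety --- and the step I would flag as the crux --- is this last point. Left properness alone makes the left-hand square a homotopy pushout, but it does not control the comparison $t$, since the relevant leg of the right-hand square is a cobase change of $\cstar I$ and so need not be a cofibration. It is flatness of $\cstar I$, not mere left properness, that forces $t$ to be a weak equivalence; this is exactly why the correct hypothesis is ``discretely flat.'' Note also that no cofibrancy assumptions on $\cstar\cA$ or $\cC$ are required, because left properness of the Bergner structure lets us argue with the strict pushout along the cofibration $k$ directly.
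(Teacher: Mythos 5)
Your proposal is correct and takes essentially the same route as the paper: both reduce the statement to the general principle that in a left proper model category a strict pushout along a flat map is a homotopy pushout, with left properness of the Bergner model structure and flatness of $\cstar(I)$ (the definition of discretely flat) as the only inputs. The only cosmetic difference is that the paper replaces the entire span by a cofibrant span and then cites \cite[B.12]{HillHopkinsRavenel:ONEKIO} for the comparison, whereas you factor only the leg $F$ and unpack the proof of that cited fact explicitly; your identification of where flatness (controlling the comparison map $t$) versus left properness (making the pushout along the cofibration $k$ a homotopy pushout) is used is accurate.
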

\begin{proof}
To form the homotopy pushout of a span in a model category, one replaces it by a cofibrant span as below and then takes the ordinary pushout (\cite[10.4]{DwyerSpalinski}):
\[ \begin{tikzcd}
& \varnothing \dar[tail] \\
\cY  \dar[we] & \mathcal{X} \lar[tail] \rar[tail]  \dar[we] & \mathcal{Z} \dar[we] \\
\cstar \cB  & \cstar \cA \lar["\cstar(I)"'] \rar[tail] & \cC 
\end{tikzcd} \]
Thus, we must show that the induced map
\[
  \cY \aamalg{\mathcal{X}} \mathcal{Z} \to \cstar \cB \aamalg{\cstar \cA} \cC
\]
is a weak equivalence of simplicial categories.
Since $\cstar(I)$ is flat by assumption and $\sCat$ is left proper, the above map is a weak equivalence by \cite[B.12]{HillHopkinsRavenel:ONEKIO} 
\end{proof}

Our model-independent statement, that Dwyer pushouts are $(\infty,1)$-categorical, holds generally for discretely flat cofibrations. 

{
\renewcommand{\thethm}{\ref{model indep main theorem}}
\begin{thm}
The inclusion $\Cat_1 \hookrightarrow \Cat_{(\infty,1)}$ of the $(\infty,1)$-category of $1$-categories into the $(\infty,1)$-category of $(\infty,1)$-categories preserves (homotopy) pushouts along Dwyer maps.
\end{thm}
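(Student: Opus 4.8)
The plan is to present the inclusion $\Cat_1 \hookrightarrow \Cat_{(\infty,1)}$ at the point-set level by the functor $\cstar \colon \Cat \to \sCat$, with $\Cat$ carrying the canonical (folk) model structure---whose weak equivalences are the equivalences of categories, whose cofibrations are the functors injective on objects, and in which every object is both fibrant and cofibrant---and with $\sCat$ carrying the Bergner model structure. The first step is to observe that $\cstar$ is homotopical: a functor of $1$-categories is an equivalence precisely when it is fully faithful and essentially surjective, and under $\cstar$ these conditions become exactly \eqref{DK local equiv} and \eqref{DK global equiv}, so $\cstar$ preserves and reflects weak equivalences. Consequently $\cstar$ descends to a functor of underlying $(\infty,1)$-categories modelling the inclusion $\Cat_1 \hookrightarrow \Cat_{(\infty,1)}$, and it remains to check that this induced functor carries the pushout of a Dwyer-map span in $\Cat_1$ to the corresponding pushout in $\Cat_{(\infty,1)}$.

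The second step identifies the relevant pushout in $\Cat_1$ with a strict $1$-categorical pushout. Given a span $\cB \xleftarrow{I} \cA \xrightarrow{F} \cC$ with $I$ a Dwyer map, $I$ is in particular injective on objects, hence a cofibration in the folk model structure; since that model structure has all objects cofibrant it is left proper, so the strict pushout $\cB \aamalg{\cA} \cC$ is already a homotopy pushout and therefore computes the pushout of this span in $\Cat_1$.

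The third step transports this across $\cstar$. By \cref{ConstColims}, $\cstar$ preserves the $1$-categorical pushout, so $\cstar(\cB \aamalg{\cA} \cC) \cong \cstar\cB \aamalg{\cstar\cA} \cstar\cC$. Because a Dwyer map is discretely flat by \cref{prop we preservation}, \cref{simp cat theorem} shows that this strict pushout of simplicial categories is a homotopy pushout in $\sCat$, hence computes the pushout of the image span in $\Cat_{(\infty,1)}$. Chaining these identifications, the induced functor sends the $\Cat_1$-pushout to the $\Cat_{(\infty,1)}$-pushout, which is the claim. The same argument works verbatim with ``Dwyer map'' replaced by ``discretely flat cofibration'' in the sense of \cref{discretely flat}, since injectivity on objects is exactly what is used in the second step and flatness is exactly what is used in the third.

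The genuinely hard work has already been done in \cref{prop we preservation}; the remaining obstacle is the bookkeeping needed to justify that $\cstar$ models the $\infty$-categorical inclusion and that ``preserving homotopy pushouts'' reduces, via the left-properness of both model structures, to the two strict-pushout-equals-homotopy-pushout statements above. I would be most careful about the claim that a weak-equivalence-preserving, strictly-colimit-preserving functor automatically preserves those homotopy pushouts that happen to be presented by strict pushouts on both sides; this is precisely why \cref{ConstColims} and \cref{simp cat theorem} combine to yield the result.
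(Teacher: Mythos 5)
Your proposal is correct and follows essentially the same route as the paper's proof: model the inclusion by $\cstar \colon \Cat \to \sCat$, use injectivity on objects plus left properness of the canonical model structure to see the strict pushout in $\Cat$ is already a homotopy pushout, and combine \cref{ConstColims} with \cref{simp cat theorem} (via the flatness from \cref{prop we preservation}) to see the same on the $\sCat$ side. The only cosmetic difference is that you justify that $\cstar$ models the inclusion by noting it preserves and reflects weak equivalences, whereas the paper invokes that $\cstar$ is right Quillen; both are adequate.
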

\begin{proof}
The inclusion $\Cat_1 \hookrightarrow \Cat_{(\infty,1)}$ can be modeled at the point-set level by the right Quillen functor $\cstar \colon \Cat \to \sCat$.  By hypothesis, a discretely flat cofibration $I\colon \cA \to \cB$ is injective on objects, hence a cofibration in the canonical model structure on $\Cat$. 
This model structure is left proper, so ordinary pushouts along such maps are homotopy pushouts by  \cite[13.5.4]{Hirschhorn:MCL}. Since the functor $\cstar \colon \Cat \to \sCat$ preserves strict pushouts, \Cref{simp cat theorem} shows that $\cstar$ preserves homotopy pushouts along discretely flat cofibrations, so the conclusion follows.
\end{proof}
\addtocounter{thm}{-1}
}

\section{Pushouts in simplicial sets}\label{sec:main-theorem}

In this section, we describe the implications of \cref{model indep main theorem} for the Joyal model structure on simplicial sets, proving the results needed in \cite{HORR}. We utilize the following commutative triangle of right Quillen functors
\[ \begin{tikzcd}[column sep=small]
& \Cat \ar[dl,"N"'] \ar[dr,"\cstar"] \\
\sSet & & \sCat \ar[ll,"\hcnerve", "\simeq"']
\end{tikzcd} \]
where $\hcnerve$ is the homotopy coherent nerve, a right Quillen equivalence between the Bergner and Joyal model structures.
This diagram commutes up to natural isomorphism since for any 1-category $\cA$
\[
  (\hcnerve \cstar \cA)_n \coloneqq \hom(\mathfrak{C}[n], \cstar \cA) \cong \hom([n], \cA) \eqqcolon (N\cA)_n 
\]
which holds because the hom simplicial sets of $\cstar \cA$ are discrete.

We first explain how to deduce \cref{DwyerPushout} from \cref{simp cat theorem}. In fact, we use the terminology of \cref{discretely flat} to prove a more general version:

{
\renewcommand{\thethm}{\ref{DwyerPushout}}
\begin{thm}
Let 
\[
\begin{tikzcd}
  \cA \arrow[d, "I"', hook]\arrow[r, "F"]\arrow[dr, phantom, "\ulcorner" very near end] &\cC \ar[d, "J", hook]\\
  \cB \arrow[r, "G" swap] &\cD
\end{tikzcd}
\]
be a pushout of categories, and assume $I$ is a discretely flat inclusion. Then the induced map of simplicial sets \[N\cB\aamalg{N\cA} N\cC \to N\cD\] is a weak categorical equivalence. 
 \end{thm}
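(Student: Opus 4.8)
The plan is to deduce the statement from \cref{simp cat theorem} by transporting its homotopy pushout across the Quillen equivalence $\hcnerve$, using the commuting triangle $N \cong \hcnerve\cstar$. First I would record that $\cstar$ preserves the strict pushout (\cref{ConstColims}), so that $\cstar\cD \cong \cstar\cB \aamalg{\cstar\cA}\cstar\cC$; since $I$ is discretely flat, \cref{simp cat theorem} (applied to $F \colon \cstar\cA \to \cstar\cC$) identifies this with the homotopy pushout of $\cstar\cB \leftarrow \cstar\cA \to \cstar\cC$ in the Bergner model structure. Via $N \cong \hcnerve\cstar$, applying $\hcnerve$ should then present $N\cD$ as the homotopy pushout of $N\cB \leftarrow N\cA \to N\cC$ in the Joyal model structure.

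Separately, I would check that the strict pushout $P \coloneqq N\cB \aamalg{N\cA} N\cC$ already computes the homotopy pushout on the simplicial-set side. Because $I$ is a discretely flat \emph{inclusion}, it is injective on objects and faithful, so $NI \colon N\cA \to N\cB$ is injective on simplices, hence a monomorphism and therefore a cofibration in the Joyal model structure. As the Joyal model structure is left proper \cite{JoyalTierney:QCSS}, a pushout along a cofibration is a homotopy pushout \cite[13.5.4]{Hirschhorn:MCL}, so $P$ is the homotopy pushout of $N\cB \leftarrow N\cA \to N\cC$.

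It then remains to see that the canonical comparison $P \to N\cD$ is an equivalence, and to do this rigorously I would pass through the left adjoint $\mathfrak{C}$ of $\hcnerve$ rather than through $\hcnerve$ itself. Since $\mathfrak{C}$ is left Quillen it preserves the strict pushout defining $P$ and carries the cofibration $NI$ to a cofibration between cofibrant objects, so $\mathfrak{C}(P)$ is a homotopy pushout of $\mathfrak{C}(N\cB) \leftarrow \mathfrak{C}(N\cA) \to \mathfrak{C}(N\cC)$ in the (left proper) Bergner structure. The Quillen-equivalence comparison $\mathfrak{C}(N\cX) \simeq \cstar\cX$ (the derived counit at the Bergner-fibrant object $\cstar\cX$) identifies this span with $\cstar\cB \leftarrow \cstar\cA \to \cstar\cC$, whose homotopy pushout is $\cstar\cD$ by \cref{simp cat theorem}; compatibly, $\mathfrak{C}(N\cD) \simeq \cstar\cD$. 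Hence $\mathfrak{C}(P) \to \mathfrak{C}(N\cD)$ is a Bergner equivalence, and as $P$ and $N\cD$ are both cofibrant while $\mathbb{L}\mathfrak{C}$ reflects weak equivalences (being an equivalence of homotopy categories), $P \to N\cD$ is a weak categorical equivalence.

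I expect the main obstacle to be precisely this transfer step: arguing not merely that the two sides have equivalent homotopy pushouts, but that the \emph{specific} canonical map $P \to N\cD$ is the equivalence, while keeping track of which functor ($\hcnerve$ versus $\mathfrak{C}$) legitimately preserves homotopy pushouts. Routing the argument through $\mathfrak{C}$—which, being left Quillen, manifestly preserves strict pushouts, cofibrations, and homotopy pushouts—is what makes this bookkeeping go through; the only genuinely computational input still required is the identification $\mathfrak{C}(N\cX) \simeq \cstar\cX$, which is the content of $\mathfrak{C} \dashv \hcnerve$ being a Quillen equivalence.
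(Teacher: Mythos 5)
Your proof is correct, and it follows the same global strategy as the paper: both arguments rest on the observation that the strict pushout $N\cB \aamalg{N\cA} N\cC$ is already a homotopy pushout in the Joyal model structure (because $NI$ is a monomorphism, using faithfulness and injectivity on objects of $I$), that $\cstar\cD \cong \cstar\cB \aamalg{\cstar\cA}\cstar\cC$ is a homotopy pushout in the Bergner model structure by \cref{ConstColims} and \cref{simp cat theorem}, and that these are matched via the Quillen equivalence between the two model structures together with the natural isomorphism $N \cong \hcnerve\cstar$ and the fibrancy of discrete simplicial categories. Where you diverge is the final transfer step: the paper applies the right adjoint, arguing that since discrete simplicial categories are Bergner-fibrant the comparison map represents the canonical map $(\mathbf{R}\hcnerve)\cstar\cB \overset{h}{\aamalg{(\mathbf{R}\hcnerve)\cstar\cA}} (\mathbf{R}\hcnerve)\cstar\cC \to (\mathbf{R}\hcnerve)(\cstar\cB \overset{h}{\aamalg{\cstar\cA}} \cstar\cC)$, which is an equivalence because $\mathbf{R}\hcnerve$, being an equivalence of $(\infty,1)$-categories, preserves homotopy pushouts. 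You instead push forward along the left adjoint $\mathfrak{C}$, use that $\mathbb{L}\mathfrak{C}$ preserves homotopy pushouts for purely formal reasons (every simplicial set is cofibrant, so the strict pushout of the $\mathfrak{C}$-image span, with one leg a cofibration, is a homotopy pushout), identify the resulting span with $\cstar\cB \leftarrow \cstar\cA \to \cstar\cC$ via the derived counit $\mathfrak{C}(N\mathcal{X}) \wto \cstar\mathcal{X}$, and conclude by the standard fact that a left Quillen equivalence reflects weak equivalences between cofibrant objects. The trade-off is real but minor: the paper's route is shorter but leans on the slightly less elementary principle that an equivalence of homotopy theories preserves homotopy pushouts even when implemented by a right Quillen functor, while your route stays within textbook model-category formalism at the cost of invoking the derived counit equivalence (i.e., the substance of $\mathfrak{C} \dashv \hcnerve$ being a Quillen equivalence) and some naturality bookkeeping to see that the triangle $\mathfrak{C}(P) \to \mathfrak{C}(N\cD) \to \cstar\cD$ commutes---bookkeeping you correctly flag and which does go through by naturality of the counit and the universal property of the pushout.
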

 \addtocounter{thm}{-1}
}

\begin{proof}
We organize the proof into the following commutative square of simplicial sets:
\[ \begin{tikzcd}
\hcnerve \cstar \cB \aamalg{\hcnerve \cstar \cA} \hcnerve \cstar \cC \rar{\cong} \dar & N\cB \aamalg{N\cA} N\cC \ar[dd] \\
\hcnerve \left( \cstar \cB \aamalg{\cstar \cA} \cstar \cC \right) \dar["\cong"] \\
\hcnerve \cstar \left( \cB \aamalg{\cA}  \cC \right) \rar{\cong} & N(\cB \aamalg{\cA}  \cC)
\end{tikzcd} \]
The top and bottom isomorphisms are instances of the natural isomorphism $N \cong \hcnerve \cstar$. The vertical maps are the canonical comparison maps induced by the universal property of the pushouts. The bottom left map is an isomorphism since $\cstar$ preserves pushouts (\cref{ConstColims}).
It remains to show that the upper left map
\[
  \hcnerve \cstar \cB \aamalg{\hcnerve \cstar \cA} \hcnerve \cstar \cC \to \hcnerve \left( \cstar \cB \aamalg{\cstar \cA} \cstar \cC \right)
\]
is a weak categorical equivalence, at which point the right map will be a weak categorical equivalence by two-of-three.

Notice that the objects in the top row are homotopy pushouts, since $NI \colon N\cA \to N\cB$ is a cofibration in $\sSet$ by the hypothesis that $I$ is faithful and injective on objects. Let $\mathbf{R}\hcnerve$ be the right derived functor of $\hcnerve$.
Since discrete simplicial categories are fibrant in the Bergner model structure, the map of simplicial sets above represents the canonical map
\[
  (\mathbf{R}\hcnerve) \cstar \cB \overset{h}{\aamalg{(\mathbf{R}\hcnerve) \cstar \cA}} (\mathbf{R}\hcnerve) \cstar \cC \to (\mathbf{R}\hcnerve) \left( \cstar \cB \overset{h}{\aamalg{\cstar \cA}} \cstar \cC \right).
\]
This is an equivalence since $\mathbf{R}\hcnerve$ is an equivalence of $(\infty,1)$-categories, hence preserves homotopy pushouts.
We conclude that $N\cB \aamalg{N\cA} N\cC \to N(\cB \aamalg{\cA} \cC)$ is a weak categorical equivalence.
\end{proof}

In the case where $F \colon \cA \to \cC$ is also injective on objects and faithful, as occurs frequently in \cite{HORR}, we are able to strengthen our conclusion and prove that the canonical comparison map is inner anodyne. 

\begin{cor} 
\label{AnodyneDwyer}
Let 
\[
\begin{tikzcd}
  \cA \arrow[d, "I"', hook]\arrow[r, "F"]\arrow[dr, phantom, "\ulcorner" very near end] &\cC \ar[d, "J", hook]\\
  \cB \arrow[r, "G" swap] &\cD
\end{tikzcd}
\]
be a pushout of categories, in which $I$ is a discretely flat inclusion and $F$ is faithful and injective on objects. Then the induced inclusion of simplicial sets \[N\cB\aamalg{N\cA} N\cC \hookrightarrow N\cD\] is inner anodyne.
 \end{cor}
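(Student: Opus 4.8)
The plan is to exhibit the monomorphism $N\cB \aamalg{N\cA} N\cC \hookrightarrow N\cD$ as a transfinite composite of pushouts of inner horn inclusions $\Lambda^m_k \hookrightarrow \Delta^m$ with $0 < k < m$. First I would fix the combinatorial picture. Because $F$ is faithful and injective on objects, the hom-set description of \cref{prop:PushoutDwyerHom} shows that $G$ and $J$ are faithful and injective on objects, so $NG$ and $NJ$ are monomorphisms and $P := N\cB \aamalg{N\cA} N\cC$ may be identified with a simplicial subset of $N\cD$, bijective on vertices; by \cref{DwyerPushout} this inclusion is already a weak categorical equivalence. Since $\cC$ is a sieve and $\cV$ its complementary cosieve in $\cD$, every nondegenerate simplex of $N\cD$ is a string $d_0 \to \cdots \to d_n$ whose $\cC$-valued vertices form an initial segment $d_0,\dots,d_p$ and whose $\cV$-valued vertices form a final segment; such a string lies in $P$ precisely when it is contained in $N\cC$ (i.e.\ $p=n$) or lifts along $G$ to $\cB$. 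The complementary nondegenerate simplices are therefore the ``straddling'' strings ($0 \le p < n$) that do not lift, and by \cref{prop:PushoutDwyerHom} each has a unique \emph{bridge} $d_p \to d_{p+1}$ with $d_{p+1} \in \cU$.

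Next I would use the right adjoint left inverse $(J \dashv S, \nu \colon JS \Rightarrow \id)$ of the inclusion of $\cC$ into its minimal cosieve to produce the horns. Under the identification $\cD(d_p, d_{p+1}) \cong \cC(d_p, S d_{p+1})$, the bridge factors as $\nu_{d_{p+1}} \circ f$ for a unique $f \colon d_p \to S d_{p+1}$ in $\cC$. This splits the complementary simplices into two classes that I would pair off: the \emph{composite-bridge} simplices, for which $f$ is not an identity, and the \emph{pure-counit} simplices, for which the bridge is the counit $\nu_{d_{p+1}}$ itself (so $d_p = S d_{p+1}$) and the $\cC$-segment has length at least one. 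To a composite-bridge simplex $\sigma$ I associate the pure-counit simplex $\tilde\sigma$ of one higher dimension obtained by inserting the vertex $S d_{p+1}$ and splitting the bridge as $d_p \xrightarrow{f} S d_{p+1} \xrightarrow{\nu} d_{p+1}$; conversely a pure-counit simplex is recovered by deleting this interior $\cC$-vertex and composing. Deleting that inserted vertex is the face $d_{p+1}\tilde\sigma = \sigma$, and since $1 \le p+1 \le n$ the index is interior, so $\sigma$ is an inner face of $\tilde\sigma$; this is the inner horn that will attach the pair.

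Finally I would organize the attachment as a filtration of $N\cD$ over $P$, indexed lexicographically first by the dimension of the filler $\tilde\sigma$ and then by \emph{decreasing} position of its bridge. At each stage one fills the inner horn spanned by all faces of $\tilde\sigma$ except $\sigma$, simultaneously adjoining $\tilde\sigma$ and $\sigma$. Checking that this is a legitimate pushout amounts to verifying that every face of $\tilde\sigma$ other than $\sigma$ already lies in the previous stage: the faces that delete a $\cC$-vertex before the bridge or a $\cV$-vertex after $d_{p+1}$ remain pure-counit (hence lie in $P$ or are fillers of strictly smaller dimension), while the single troublesome face, which deletes the bridge target (the vertex $d_{p+1}$) and thereby produces a composite-bridge simplex of the same dimension as $\sigma$, is precisely the one whose own filler has a bridge one position further to the right, and so was attached earlier in the secondary order. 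The transfinite composite of these pushouts is the desired inclusion, which is therefore inner anodyne.

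The main obstacle is exactly this last verification: because the filler $\tilde\sigma$ and its target face $\sigma$ are adjoined together, a naive induction on dimension does not close up, and one must produce an honest well-ordering of the complementary simplices under which every horn face genuinely precedes the face being filled. Pinning down the secondary statistic so that the ``delete the bridge target'' face is always handled earlier---while simultaneously checking that the remaining faces never escape prematurely into $P$, that no filler is degenerate, and that the boundary cases (a single $\cC$-vertex, i.e.\ $p=0$, or a single $\cV$-vertex) fall into $N\cC$ or lift to $\cB$ as required---is where the real bookkeeping lies; the naturality of $\nu$ and the functoriality of the hom-descriptions in \cref{prop:PushoutDwyerHom} are the tools that make these checks routine once the ordering is fixed.
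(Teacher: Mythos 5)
There is a genuine gap, and it is one of generality rather than of bookkeeping: the corollary is stated for $I$ a \emph{discretely flat inclusion} (\cref{discretely flat}), but your entire combinatorial apparatus exists only when $I$ is a Dwyer map. The sieve/cosieve decomposition of $\cD$, the minimal cosieve, the right adjoint left inverse $(J \dashv S, \nu)$, and above all the hom-set computation of \cref{prop:PushoutDwyerHom} --- which is what gives you the unique bridge, its factorization $\nu_{d_{p+1}} \circ f$, and the composition rule $g \circ \hat{h} = \widehat{Sg \circ h}$ driving the pairing and the ordering --- are all consequences of the Dwyer structure and are simply unavailable for a general discretely flat inclusion. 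For instance, for co-Dwyer maps the sieve picture is reversed (so at best your argument dualizes), for pseudo-Dwyer maps (retracts of Dwyer maps) there is no adjoint factorization of crossing morphisms at all, and for the inclusion $\catone \amalg \catone \hookrightarrow (\catone \amalg \catone)^{\triangleright}$ of \cref{discretely flat examples} the subcategory is a sieve but admits no right adjoint left inverse into any cosieve, so there is no counit $\nu$ and no bridge factorization. Since ``discretely flat'' is defined purely by a homotopical flatness property with no explicit description of the pushout, no explicit filtration of $N\cD$ is available in general, and your proof establishes at most the Dwyer-map case of \cref{AnodyneDwyer}.

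The paper's proof is entirely different and works uniformly: the canonical map $j$ is a monomorphism (as you verified), it is a weak categorical equivalence by \cref{DwyerPushout} (which you cite and then set aside), hence an acyclic cofibration in the Joyal model structure; it is moreover bijective on $0$-simplices, since $\ob\cD \cong \ob\cB \amalg_{\ob\cA} \ob\cC$, and its codomain $N\cD$ is a quasi-category. A recognition theorem of Stevenson then says that any Joyal acyclic cofibration that is bijective on $0$-simplices with quasi-category codomain is inner anodyne. So you had assembled all the hypotheses of the short proof before launching the combinatorial detour; the missing idea is this external recognition theorem, which is exactly what lets the argument bypass any explicit description of $\cD$. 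For what it is worth, in the Dwyer case your filtration does appear to close up --- the deferred checks work, including two subcases your description of the ``troublesome face'' omits: deleting the bridge target can also yield a pure-counit simplex (when $Sg = \id$, handled in an earlier dimension round) or a simplex of $P$ (when the bridge target is the final vertex, or when the resulting string lifts to $\cB$) --- and this is essentially the style of the authors' original quasi-categorical argument referenced as \cite{arXiv-v2}; but as written the proposal does not prove the corollary in its stated generality.
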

 \begin{proof}
 Observe in this case that the canonical map $j \colon N\cB\amalg_{N\cA}N\cC \hookrightarrow N\cD$ is an inclusion and thus, by \cref{DwyerPushout}, an acyclic cofibration in the Joyal model structure. This acyclic cofibration is also bijective on 0-simplices and has codomain a quasi-category. By \cite[2.19]{Stevenson:MSCB} or \cite[5.7]{Stevenson:NJMS} it follows that  $j$ is inner anodyne.\footnote{See \cite{Campbell:CQCT} for related discussion and an example of an acyclic cofibration that is bijective on 0-simplices but whose codomain is not a quasi-category that is not inner anodyne.}
\end{proof}

\bibliographystyle{alpha}
\bibliography{refs_inf-2}

\end{document}